\def\makeautorefname#1#2{\expandafter\def\csname#1autorefname\endcsname{#2}}
\def\equationautorefname~#1\null{(#1)\null}
\newtheorem{thm}{Theorem}[section]
\newtheorem{cor}{Corollary}[section]
\newtheorem{prop}{Proposition}[section]
\newtheorem{lem}{Lemma}[section]
\theoremstyle{definition}
\newtheorem{defn}{Definition}[section]
\newtheorem{rem}{Remark}[section]
\let\c@obs=\c@thm
\let\c@cor=\c@thm
\let\c@prop=\c@thm
\let\c@lem=\c@thm
\let\c@prob=\c@thm
\let\c@con=\c@thm
\let\c@conj=\c@thm
\let\c@defn=\c@thm
\let\c@notn=\c@thm
\let\c@notns=\c@thm
\let\c@exmp=\c@thm
\let\c@ax=\c@thm
\let\c@pro=\c@thm
\let\c@ass=\c@thm
\let\c@warn=\c@thm
\let\c@rem=\c@thm
\let\c@sch=\c@thm
\newcommand{\C}{\mathbb C}
\renewcommand{\H}{\mathbb H}
\newcommand{\Q}{\mathbb Q}
\newcommand{\Z}{\mathbb Z}
\newcommand{\R}{\mathbb R}
\newcommand{\fs}{\mathfrak s}
\newcommand{\fp}{\mathfrak p}
\newcommand{\fm}{\mathfrak m}
\newcommand{\cA}{\mathcal A}
\newcommand{\cB}{\mathcal B}
\newcommand{\cL}{\mathcal L}
\newcommand{\cG}{\mathcal G}
\newcommand{\RNum}[1]{\uppercase\expandafter{\romannumeral #1\relax}}
\newcommand{\Gal}{\mathrm {Gal\ }}
\newcommand{\grad}{\mathrm{grad}}
\title{Counting hyperbolic 4-manifolds with vanishing Seiberg-Witten invariants}
\author{Kaixu Zhang}
\begin{document}

\begin{abstract}
Ian Agol and Francesco Lin proved the existence of hyperbolic four-manifolds with vanishing Seiberg-Witten invariants. We prove that the number of such manifolds of volume at most $v$ is asymptotically bounded by $v^{cv}$ considered up to commensurability, which has the same form as the lower bound and upper bound of the number of hyperbolic four-manifolds of volume at most $v$ proved by Tsachik Gelander and Arie Levit.
\end{abstract}

\maketitle

\tableofcontents

\section{Introduction}  
Seiberg-Witten theory gives rise to a powerful interplay between the geometry and topology of smooth 4-manifolds. Witten \cite{witten1994monopolesfourmanifolds} proved that if a 4-manifold with $b_2^+\geq 2$ admits a metric of positive scalar curvature, then all its Seiberg-Witten invariants vanish. In \cite[Conjecture 1.1]{lebrun2001hyperbolicmanifoldsharmonicforms}, Claude LeBrun asked whether the Seiberg-Witten invariants of compact hyperbolic 4-manifolds vanish, based on the estimates of the scalar curvature and Weyl curvature for the spin$^c$ structure $\fs$ with non-trivial Seiberg-Witten invariants. 

\begin{prop}\cite[Theorem 3.5]{lebrun2001hyperbolicmanifoldsharmonicforms}

Let $M^4$ be a smooth compact oriented $4$-manifold with $b_2^+\geq 2$, and suppose that $\fs$ is a spin$^c$ structure with non-trivial Seiberg-Witten invariant. Let $g$ be any Riemannian metric on $M$, and let $\phi$ be a $g$-self-dual harmonic 2-form with de Rham class $[\phi]\,\in H^2(M,\R)$. Let $s_g$ be the scalar curvature and $w_g$ be the lowest eigenvalue of the self-dual Weyl curvature $W^+$ of $g$. Then the function $f=\sqrt{|\phi|}$ satisfies\[
\int_M\left(\frac{2}{3}s_g+2w_g\right)|\phi_g|d\mu_g+4\int_M|df|^2_gd\mu_g\leq (4\pi\sqrt{2})c_1(\fs)\cdot\mathrm{[}\phi\mathrm{]}
\]
\end{prop}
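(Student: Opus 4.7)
My plan is to combine four standard ingredients: the Seiberg-Witten equations, the Weitzenböck formula for positive spinors, the Weitzenböck formula for harmonic self-dual 2-forms, and the Bourguignon--Gauduchon refined Kato inequality in dimension four.

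Since $\fs$ has non-trivial Seiberg-Witten invariant, for any metric $g$ there is a monopole $(A,\Phi)$ satisfying $D_A\Phi=0$ and $F_A^+=\sigma(\Phi)$, where $\sigma(\Phi)$ is the standard quadratic expression in $\Phi$ with pointwise norm $|\sigma(\Phi)|=|\Phi|^2/(2\sqrt{2})$. Pairing $F_A^+$ against the self-dual harmonic form $\phi$ via Chern--Weil, and invoking the pointwise Cauchy-Schwarz inequality $\langle\sigma(\Phi),\phi\rangle\le|\sigma(\Phi)||\phi|$, yields the cohomological bound
\[
(4\pi\sqrt{2})\,c_1(\fs)\cdot[\phi]\;\le\;\int_M|\Phi|^2|\phi|\,d\mu_g.
\]
Proving the target inequality thus reduces to producing an appropriate upper bound for $\int_M|\Phi|^2|\phi|\,d\mu_g$ in terms of curvature and $f=\sqrt{|\phi|}$.

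The spin$^c$ Weitzenböck formula, together with $F_A^+\cdot\Phi=\tfrac12|\Phi|^2\Phi$ (from the second Seiberg-Witten equation), gives the pointwise identity
\[
\tfrac12\Delta|\Phi|^2+|\nabla_A\Phi|^2+\tfrac{s_g}{4}|\Phi|^2+\tfrac14|\Phi|^4=0.
\]
Meanwhile, the form Weitzenböck formula $\nabla^*\nabla\phi=2W^+(\phi)-\tfrac{s_g}{3}\phi$ for the harmonic self-dual $\phi$, paired with $\phi/|\phi|$ on $\{\phi\ne0\}$ and combined with the refined Kato inequality $|\nabla\phi|^2\ge\tfrac32\bigl|d|\phi|\bigr|^2$ and the eigenvalue bound for $\langle W^+(\phi),\phi\rangle$ in terms of $w_g$, produces a pointwise differential inequality for $|\phi|$. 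The identity $\bigl|d|\phi|\bigr|^2=4|\phi|\,|df|^2$ (for $f=\sqrt{|\phi|}$) supplies the coefficient $4$ for $|df|^2$ appearing in the statement.

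The heart of the proof is the orchestration of these two Weitzenböck identities: multiply the spinor Weitzenböck identity by $|\phi|$, integrate over $M$, transfer the Laplacian to $|\phi|$ by integration by parts, substitute the pointwise estimate for $\Delta|\phi|$ obtained in the previous step, and finally apply a Cauchy-Schwarz-type bound to the resulting $\int_M|\Phi|^4|\phi|\,d\mu_g$ term to close the argument against the cohomological bound of the second paragraph. The main obstacle is the coefficient and sign bookkeeping: one must verify that the chain of substitutions produces the coefficients $\tfrac{2s_g}{3}$, $2w_g$, and $4$ exactly as stated, and that the direction of the eigenvalue bound on $\langle W^+(\phi),\phi\rangle$ is the one consistent with interpreting $w_g$ as the \emph{lowest} eigenvalue of $W^+$. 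Each ingredient is standard, but getting the constants to match is where the real work lies.
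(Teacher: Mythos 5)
The paper itself does not prove this proposition; it simply cites it from LeBrun's 2001 paper \cite{lebrun2001hyperbolicmanifoldsharmonicforms}. So I am comparing your proposal against LeBrun's actual argument.

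Your proposal correctly assembles most of the standard ingredients (spinor and 2-form Weitzenb\"ock formulas, the Bourguignon--Gauduchon refined Kato inequality $|\nabla\phi|^2\ge\tfrac32|d|\phi||^2$, Chern--Weil, and the identity $|d|\phi||^2=4|\phi|\,|df|^2$). However, there is a conceptual flaw in the logical direction of your very first step, and it cannot be repaired by careful bookkeeping. You derive, via Chern--Weil and pointwise Cauchy--Schwarz, the bound
\[
(4\pi\sqrt{2})\,c_1(\fs)\cdot[\phi]\le\int_M|\Phi|^2|\phi|\,d\mu_g,
\]
which is an \emph{upper} bound on $c_1(\fs)\cdot[\phi]$. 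But the target statement places $c_1(\fs)\cdot[\phi]$ on the \emph{right-hand} side of a $\le$, so what is needed is a \emph{lower} bound on $c_1(\fs)\cdot[\phi]$ by the curvature integral. Chaining your cohomological bound with an upper bound on $\int|\Phi|^2|\phi|$ can only yield ``(curvature integral) $\ge 4\pi\sqrt{2}\,c_1\cdot[\phi]$ is impossible'' type conclusions and never the claimed inequality. The root difficulty is that for an unperturbed monopole there is no control on the sign of the pointwise pairing $\langle i\sigma(\Phi),\phi\rangle$, so Cauchy--Schwarz inevitably goes the wrong way.

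The essential ingredient you are missing is LeBrun's use of the \emph{perturbed} Seiberg--Witten equations, with the perturbation 2-form chosen to be a large multiple of the harmonic form $\phi$ itself. Nonvanishing of the Seiberg--Witten invariant guarantees solutions for every such perturbation; the large perturbation forces the self-dual curvature $F_A^+$ to become asymptotically aligned with $\phi$, which turns the Cauchy--Schwarz step from an inequality in the wrong direction into an (asymptotic) equality, and hence into a usable \emph{lower} bound for $c_1\cdot[\phi]$. The integrated spinor Weitzenb\"ock identity for the perturbed equation, combined with the 2-form Weitzenb\"ock, the eigenvalue bound $\langle W^+(\phi),\phi\rangle\ge w_g|\phi|^2$, and the refined Kato inequality, is then pushed through a limit as the perturbation parameter tends to infinity. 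Only after that limiting step do the coefficients $\tfrac{2}{3}s_g$, $2w_g$, and $4$ in front of $|df|^2$ emerge. Without the perturbation and the limit, neither the direction of the inequality nor the constants come out correctly, so this is not a bookkeeping issue but a missing structural step in the argument.
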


So the Seiberg–Witten basic classes satisfy stronger constraints for hyperbolic manifolds $(M^4,g)$ with the scalar curvature $s_g=-12$ and the Weyl curvature $W_g=0$.
 
In \cite[Theorem 1.1]{agol2018hyperbolicfourmanifoldsvanishingseibergwitten}, Ian Agol and Francesco Lin proved the existence of hyperbolic four-manifolds with vanishing Seiberg-Witten invariants, and remarked that there are infinitely many commensurability classes of arithmetic hyperbolic 4-manifolds containing representatives with vanishing Seiberg-Witten invariants.

 Let $C(v)$ denote the number of commensurability classes of hyperbolic manifolds admitting a
representative of volume at most $v$. In \cite[Theorem 1.2]{gelander2014countingcommensurabilityclasseshyperbolic}, Tsachik Gelander and Arie Levit proved that there are positive constants $a$ and $b$, such that $v^{av}\leq C(v)\leq v^{bv}$ for all $v$ sufficiently large. Inspired by their methods, we show that
the number of commensurability classes of hyperbolic manifolds admitting a representative with vanishing Seiberg-Witten invariants of volume at most $v$ has the same form:
\begin{thm}\label{1.1}
Let $VC(v)$ denote the number of commensurability classes of hyperbolic manifolds admitting a
representative with vanishing Seiberg-Witten invariants of volume at most $v$. Then there exist positive constants $b$ and $c$ such that \[
v^{cv}\leq VC(v)\leq v^{bv}
\]
for all $v$ sufficiently large.
\end{thm}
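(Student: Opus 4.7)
The upper bound $VC(v)\le v^{bv}$ is immediate from Gelander--Levit: any commensurability class contributing to $VC(v)$ also contributes to $C(v)$, so $VC(v)\le C(v)\le v^{bv}$ with the same constant $b$. The real work lies in the lower bound $VC(v)\ge v^{cv}$. My plan is to thread Agol--Lin's construction of hyperbolic 4-manifolds with vanishing Seiberg--Witten invariants through Gelander--Levit's lower-bound counting argument, producing $v^{cv}$ distinct arithmetic commensurability classes of volume at most $v$ each of which contains a representative with vanishing SW invariants.

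I would first revisit how Gelander--Levit produce their $v^{av}$ commensurability classes: they construct arithmetic lattices in $O(4,1)$ coming from quadratic forms of signature $(4,1)$ over varying totally real number fields, where the exponent reflects the number of admissible arithmetic data (field, form, local ramification) with covolume at most $v$; distinctness of the commensurability classes follows from Borel's classification of arithmetic lattices up to commensurability. Next I would isolate the structural property $(\ast)$ extracted from Agol--Lin which guarantees that some finite cover of an arithmetic hyperbolic 4-manifold has vanishing SW invariants---namely the existence of a totally geodesic codimension-one submanifold together with a symmetric involution exchanging the two pieces, which when combined with the LeBrun inequality stated above and the hyperbolic curvature values $s_g=-12$, $W_g=0$ rules out every would-be basic class.

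The central technical step is to show that a $v^{cv}$-size subfamily of Gelander--Levit's arithmetic data can be upgraded to lattices satisfying $(\ast)$. I would restrict to quadratic forms that split off a suitable rational subform corresponding to a totally geodesic copy of $\H^3$, then pass to a principal congruence subgroup of sufficiently deep level to ensure that the cover separates along the dual hypersurface and admits the required symmetric involution. Since a congruence refinement of level $N$ inflates covolume only polynomially in $N$, the parameter count survives with a possibly smaller constant $c<a$, and Borel's criterion still distinguishes the resulting commensurability classes. The main obstacle is precisely this compatibility check: one must verify both that the restriction to the split-form subfamily still leaves $v^{cv}$ admissible arithmetic data, and that the congruence cover needed for Agol--Lin's involution can be taken uniformly across the family so the volume blow-up does not destroy the count. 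Resolving this is essentially a bookkeeping problem, balancing the parameter reduction from the splitting condition against the polynomial volume inflation coming from the congruence cover.
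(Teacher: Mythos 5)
Your upper bound is correct and matches the paper: every class counted by $VC(v)$ is counted by $C(v)$, so Gelander--Levit's $C(v)\le v^{bv}$ is inherited immediately. The lower bound, however, rests on two misconceptions that together make the proposed route unworkable.

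First, Gelander--Levit's lower bound $v^{av}$ does \emph{not} come from counting arithmetic lattices by varying the arithmetic data (field, form, local ramification), with distinctness supplied by Borel's commensurability classification. There are far fewer than $v^{cv}$ commensurability classes of \emph{arithmetic} hyperbolic $4$-manifolds of covolume at most $v$; by Borel--Prasad finiteness and Belolipetsky's quantitative refinements the arithmetic count grows much more slowly, so no subfamily of arithmetic lattices, however cleverly parametrized, can produce a $v^{cv}$ lower bound. Gelander--Levit's count is achieved by leaving the arithmetic world: one builds a finite stock of building blocks (manifolds with totally geodesic boundary, each embedded in a distinct arithmetic manifold) and glues them along a fixed boundary $3$-manifold $N$ according to $4$-regular decorated graphs. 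The superexponential count comes from the number of index-$n$ subgroups of the free group $F_2$, $a_n\ge n^{n/2}$, and non-commensurability of the resulting (generically non-arithmetic) closed manifolds is detected by non-isomorphism of the decorated graphs, not by Borel's classification. The paper follows exactly this scheme, with $N = M_5$ (the fifth Fibonacci manifold) as the common boundary, and with six building blocks produced from six pairwise non-commensurable arithmetic $4$-manifolds distinguished via Hasse--Witt invariants at a well-chosen prime (Chebotarev density theorem guarantees such a prime).

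Second, your characterization of Agol--Lin's vanishing criterion is not the one they prove. The LeBrun curvature inequality combined with $s_g=-12$, $W_g=0$ is the \emph{conjectural} route; Agol--Lin's actual mechanism (restated as Corollary~\ref{2.4} in this paper) is: if a $4$-manifold $X$ with $b_2^+\ge 1$ contains an embedded non-separating $L$-space $Y$, then $X$ has infinitely many finite covers with all Seiberg--Witten invariants vanishing. The proof cuts a double cover along $Y\#\overline Y$, which is again an $L$-space, and applies the Kronheimer--Mrowka gluing formula (Proposition 3.11.1 of their book) to kill the invariant. There is no ``symmetric involution'' hypothesis; what is required is precisely that $M_5$ be an $L$-space and that the decomposition gives $b_2^+\ge 1$ on each side. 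In the paper this is automatic because every $M_\Delta$ built from the decorated graph construction contains $M_5$ as a totally geodesic non-separating hypersurface, so Corollary~\ref{2.4} applies directly, after which the $v^{cv}$ count of decorated graphs of bounded size finishes the argument. So the missing ideas in your sketch are precisely the gluing/interbreeding construction (to escape the arithmetic count) and the $L$-space formulation of the vanishing criterion; the congruence-refinement and involution steps you propose would not produce either the needed count or the needed vanishing.
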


Theorem \hyperref[1.1]{1.2} is proved by embedding the $L$-space $M_5$ as a totally geodesic non-separating hypersurface into non-commensurable hyperbolic 4-manifolds, using the main result of Kolpakov–Reid–Slavich\cite{Kolpakov_2018}. By modifying Gelander-levit's approach in \cite{gelander2014countingcommensurabilityclasseshyperbolic}, most of our work is constructing appropriate building blocks, each of whose boundaries is a union of totally geodesic copies of the $L$-space $M_5$. By gluing them according to decorated graphs, we obtain infinitely many non-commensurable compact hyperbolic 4-manifolds with the non-separating $L$-space $M_5$, which satisfy the condition of \cite[Corollary 2.5]{agol2018hyperbolicfourmanifoldsvanishingseibergwitten}, .

~\\

In Section 2, we recall the basic construction of the Seiberg-Witten equations and the definitions of Seiberg-Witten invariants for closed 4-manifolds and monopole floer homology groups for closed 3-manifolds. Then we present a vanishing criterion for Seiberg-Witten invariants involving the $L$-space as a separating hypersurface.

In Section 3, we present the rough outline of embedding arithmetic hyperbolic 3-manifolds as a totally geodesic hypersurface into hyperbolic 4-manifolds. Here certain technical details are required in order to ensure compactness and boundary control.

In Section 4, we recall the construction in \cite{gelander2014countingcommensurabilityclasseshyperbolic} by establishing connections between interbreeding of hyperbolic manifolds and decorated graphs. We also explain how to construct hyperbolic manifolds from decorated graphs and show that commensurable manifolds have isomorphic decorated graphs.

In Section 5, we recall the examples by \cite{agol2018hyperbolicfourmanifoldsvanishingseibergwitten}. Then we construct suitable building blocks and check that the infinitely many manifolds obtained by these building blocks satisfy the condition of Theorem \hyperref[1.1]{1.2}.

\section{Seiberg-Witten invariants and a vanishing criterion}

We briefly review the necessary background on the Seiberg-Witten theory. Most of our discussions and notations are based on \cite{cbaf2759-40f3-37dd-8d44-27af0e7bad3a} and \cite{Kronheimer_Mrowka_2007}.

Let $X$ be an oriented 4-dimensional Riemannian manifold. a spin$^c$ structure is a hermitian vector bundle $S_X\rightarrow X$ of rank 4, with a Clifford multiplication\[
\rho: TX\rightarrow \mathrm{Hom}(S_X,S_X),\]
such that at each $x\in X$ we can find an oriented orthonormal frame $e_0,...e_3$ with
\[
\rho(e_0)=\begin{bmatrix}
0& -I_2\\
I_2 & 0 \\
\end{bmatrix}, \qquad \rho(e_i)=\begin{bmatrix}
0 & -\sigma_i^*\\
\sigma_i & 0\\ 
\end{bmatrix} \quad (i=1,2,3)
\]
in some orthonormal basis of the fiber $S_x$. Here $I_2$ is the $2\times2$ identity matrix and $\sigma_i$ is the Pauli matrices. If we extend Clifford multiplication to complex forms, then in the same basis for $S_x$ we have
\[
\rho(\mathrm{vol}_x)=\begin{bmatrix}
-I_2& 0\\
0 & I_2\\
\end{bmatrix}\]where $\mathrm{vol}=e_0\wedge e_1\wedge e_2\wedge e_3$ is the oriented volume form. So the eigenspaces of  $\rho(\mathrm{vol})$ give a decomposition of $S_X$ into two orthogonal rank-2 bundles.

Let $X$ be an oriented compact Riemannian $4$-manifold with a spin$^c$ structure $\fs_X=(S_X,\rho)$ and corresponding $-1$-eigenspace $S_X^+$ of the Clifford multiplication $\rho(\mathrm{vol}_x)$. The Seiberg-Witten equations associated to the spin$^c$ structure $\fs_X$ are equations for a pair $(A,\Phi)$ consisting of a spin$^c$ connection $A$ and a section $\Phi$ of the associated spin bundle $S_X^+$. The equations are the following:
\begin{equation}
\left\{
\begin{array}{l}
D^+_A\Phi=0  \\
F^+_{A^t}=(\Phi\Phi^*)_0+\eta
\end{array}.
\right.
\end{equation}
Here $F^+_{A^t}$ is the self-dual part of the curvature 2-form $F_{A^t}$ of the connection $A^t$, $(\Phi\Phi^*)_0$ denotes the trace-free part of the hermitian endomorphism $\Phi\Phi^*$ of the bundle $S^+_X$ and $\eta$ is some perturbation of $2$-form. A solution $(A,\Phi)$ is called reducible if $\Phi=0$, and irreducible otherwise.

The gauge group $\cG$ of $X$ is the group of unitary bundle automorphisms of $S_X$ which commutes with Clifford multiplication, and it can be identified with the group of $S^1$-valued functions $u:X\rightarrow S^1$, acting by scalar multiplication. The set of solutions $(A,\Phi)$ is invariant under the action of the gauge group, and we write $N(X, \fs_x)$ for the quotient space of the set of solutions of the equations (1) by the action of $\cG$. Some properties of the moduli space $N(X, \fs_X)$ are listed in the following theorem.

\begin{thm}\cite[Theorem 1.4.4]{Kronheimer_Mrowka_2007},  
Suppose that the perturbation $\eta$ is chosen so that the moduli space $N(X,\fs_X)$
 is regular, i.e. the linearization of the equations is a surjective linear operator for all solutions $(A,\Phi)$, and that $N(X,\fs_X)$ contains no reducible solutions, as we can always do when $b_2^+(X)\geq1$. Then the moduli space $N(X,\fs_X)$ is a smooth compact manifold, whose dimension $d$ is given by the formula\[
 d=\frac{1}{4}\left(c_1(S^+_X)^2[X]-2\lambda(X)-3\sigma(X)\right)\]

\end{thm}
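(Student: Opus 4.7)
The plan is to establish the result in three stages: construct a local smooth manifold structure on $N(X,\fs_X)$ via the implicit function theorem, prove compactness through Weitzenb\"ock-type a priori estimates, and compute the dimension as a Fredholm index.

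First, I would pass to Sobolev completions $L^2_k$ ($k\geq 2$) of the configuration space $\cC=\cA(X)\times\Gamma(S_X^+)$ so that the Seiberg-Witten map
\[
\cF(A,\Phi)=\bigl(D_A^+\Phi,\ F^+_{A^t}-(\Phi\Phi^*)_0-\eta\bigr)
\]
becomes a smooth map of Banach manifolds on which the gauge group $\cG$ acts smoothly. At an irreducible solution the $\cG$-stabilizer is trivial: unique continuation for the Dirac operator prevents $\Phi$ from vanishing on any open set, so any $u:X\to S^1$ with $u\Phi=\Phi$ must equal $1$. The Coulomb slice $d^*(A-A_0)=0$ therefore cuts the $\cG$-orbit transversely near a solution $(A_0,\Phi_0)$, and the regularity hypothesis is exactly the statement that the restriction of $d\cF$ to this slice is a surjective Fredholm operator. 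The implicit function theorem then produces a smooth manifold chart whose dimension equals the Fredholm index.

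Second, compactness is the analytic heart of the argument. For any solution the Weitzenb\"ock identity
\[
D_A^*D_A\Phi=\nabla_A^*\nabla_A\Phi+\tfrac{s_g}{4}\Phi+\tfrac{1}{2}\rho(F^+_{A^t})\Phi
\]
combined with the curvature equation $F^+_{A^t}=(\Phi\Phi^*)_0+\eta$ yields a pointwise differential inequality for $|\Phi|^2$; applying the maximum principle at an interior maximum gives a uniform bound
\[
\|\Phi\|_{C^0}^2\leq C\bigl(\|s_g\|_{C^0}+\|\eta\|_{C^0}\bigr)
\]
independent of the solution. This in turn controls $\|F^+_{A^t}\|_{C^0}$, whereupon Uhlenbeck's weak compactness theorem supplies gauge representatives with uniformly bounded $L^2_1$-norms; standard elliptic bootstrapping upgrades this to smooth subsequential convergence modulo $\cG$, so $N(X,\fs_X)$ is compact.

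Third, the dimension is the real Fredholm index of the augmented linearization
\[
\cL:\Omega^1(X;i\R)\oplus\Gamma(S_X^+)\ \longrightarrow\ \Omega^{2,+}(X;i\R)\oplus\Omega^0(X;i\R)\oplus\Gamma(S_X^-)
\]
obtained by appending the Coulomb condition $d^*$ to the linearized Seiberg-Witten map. The zeroth-order terms are compact, so $\mathrm{ind}(\cL)=\mathrm{ind}_\R(D_A^+)+\mathrm{ind}\bigl(d^+\oplus d^*\bigr)$. Atiyah-Singer for the spin$^c$ Dirac operator gives $\mathrm{ind}_\R(D_A^+)=\tfrac{1}{4}(c_1(\fs)^2[X]-\sigma(X))$, while the elliptic complex $\Omega^0\to\Omega^1\to\Omega^{2,+}$ has Euler characteristic $b_0-b_1+b_2^+=\tfrac{1}{2}(\lambda(X)+\sigma(X))$, contributing $-\tfrac{1}{2}(\lambda(X)+\sigma(X))$ to the index. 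Adding the two contributions yields the claimed formula. The main obstacle is the compactness step: the pointwise bound on $\Phi$ is special to the Seiberg-Witten equations and without it one would face bubbling phenomena as in Yang-Mills theory, whereas the local smooth structure and the index computation are routine once the correct Fredholm setup and gauge slice have been identified.
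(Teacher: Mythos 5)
This theorem is not proved in the paper; it is quoted verbatim (with citation) from Kronheimer--Mrowka as background material. Your proof sketch is a correct and faithful summary of the standard argument that appears in the cited reference: Fredholm theory on a Coulomb gauge slice (with triviality of the irreducible stabilizer coming from unique continuation for $D_A$), a priori $C^0$ bound on $|\Phi|$ from the Lichnerowicz--Weitzenb\"ock formula plus the maximum principle, Uhlenbeck compactness and elliptic bootstrapping for the connection, and an index computation splitting the linearized operator into the real index of the spin$^c$ Dirac operator, $\tfrac{1}{4}(c_1(\fs)^2[X]-\sigma(X))$, and the index of $d^+\oplus d^*$, which equals $b_1 - 1 - b_2^+ = -\tfrac{1}{2}(\chi(X)+\sigma(X))$; adding these gives $\tfrac{1}{4}(c_1(\fs)^2[X]-2\chi(X)-3\sigma(X))$ as required. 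Since the paper itself does not supply a proof, there is nothing to compare against; your argument is the standard one and I find no gaps worth flagging, beyond the minor observation that the sharp $C^0$ bound is $\|\Phi\|_\infty^2\le \max(-\min s_g,0)+C\|\eta\|_\infty$ rather than a bound involving $\|s_g\|_{C^0}$, though this does not affect the compactness conclusion.
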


We denote by $\cB(X,\fs_X)$ the quotient space of $\cA\times\Gamma(S_X^+)$ by the action of $\cG$, and $B^*(X,\fs_X)$ is the irreducible part. After choosing a perturbation $\eta$, we assume $N(X,\fs_X)\subset\cB^*(X,\fs_X)$, and if we further give a homology orientation, we have a well-defined class $[N(X,\fs_X)]\in H_d(\cB^*(X,\fs_X))$.

Since $\cB^*(X,\fs_X)$ is the quotient space of $\cA\times(\Gamma(S^+_X)\setminus\{0\})$ by the free action of $\cG$, the quotient map\[
\cA\times(\Gamma(S^+_X)\setminus\{0\})\rightarrow\cB^*(X,\fs_X)
\]
is a principal $\cG$-bundle. And if we choose a basepoint $x_0\in X$, then we obtain a homomorphism $\cG\rightarrow S^1$ by evaluation at $x_0$, and there is an associated principal $S^1$ bundle $P\rightarrow \cB^*(X,\fs_X)$. Therefore, there is a well-defined 2-cohomology class\[
u=c_1(P)\in H^2(\cB^*(X,\fs_X);\Z).\]
And the Seiberg-Witten invariants $\fm(X,\fs_X)\in \Z$ are defined by the formula\[
\fm(X,\fs_X)=\left\{
\begin{array}{lr}
\langle u^{\frac{d}{2}},[N(X,\fs_X)]\rangle, & \mathrm{when}\;d \;\mathrm{is\; even},\\
0, & \mathrm{when} \;d\; \mathrm{is \;odd}.
\end{array}
\right.
\]  

Let $Y$ be a closed, connected, oriented Riemannian 3-manifold. For each isomorphism class of spin$^c$ structure $\fs$ on $Y$, we choose a reference spin$^c$ connection $B_0=B_0(\fs)$ on a spin bundle $S=S(\fs)$. The Chern-Simons-Dirac function of a spin$^c$ connection $B$ and a section $\Psi$ of the corresponding spin bundle, defined by\[
\cL(B,\Psi)=-\frac{1}{8}\int_Y(B^t-B_0^t)\wedge(F_{B^t}+F_{B^t_0})+\frac{1}{2}\int_Y\langle D_B\Psi,\Psi\rangle d\mathrm{vol},
\]
and we obtain the gradient of the function\[
\grad\, \cL=\left((\frac{1}{2}*F_{B^t}+\rho^{-1}(\Psi\Psi^*)_0)\otimes 1_S, D_B\Psi\right).
\]

We can also define the gauge group $\cG$ of $Y$ as the group of unitary bundle automorphisms of $S_X$ which commutes with Clifford multiplication. The downward gradient gives a flow on the blow-up $\cB^\sigma(Y,\fs)$ of the quotient space $\cB(Y,\fs)=(\cA\times\Gamma(S))/\cG$, and after perturbing the function to achieve the transversality, we obtain three complexes $\hat{C}$, $\bar{C}$ and $\check{C}$, corresponding to some combinations of boundary-stable, boundary-unstable and interior critical points up to grading shifts. So it is defined for each spin$^c$ structure on $Y$ the monopole Floer homology groups corresponding the three complexes, and they fit into the long exact sequence of graded $\Z[U]$-modules\[
\begin{tikzcd}
\cdots \arrow[r] & {\overline{HM}_*(Y,\fs)} \arrow[r, "i_*"] & {\widecheck{HM}_*(Y,\fs)} \arrow[r, "j_*"] & {\widehat{HM}_*(Y,\fs)} \arrow[r, "p_*"] & {\overline{HM}_*(Y,\fs)} \arrow[r] & \cdots
\end{tikzcd}
\]
where $U$ has degree $-2$. The reduced group $HM_*(Y,\fs)$ is defined as the image of $j_*$ in $\widehat{HM_*}(Y,\fs)$. And when $Y$ is a rational homology sphere, we have an identification of $\Z[U]$-modules (up to grading shift) with the Laurent series(\cite[Proposition 35.3.1]{Kronheimer_Mrowka_2007})\[
\overline{HM}_*(Y,\fs)\cong\Z[U^{-1},U].\]

\begin{defn}\cite[Definition 9.1]{kronheimer2004monopoleslensspacesurgeries}, \label{2.3}
We say that a rational homology sphere $Y$ is an $L$-space if $j_*$ is trivial for all spin$^c$ structures.
\end{defn}

So for an $L$-space $Y$, $HM_*(Y, \fs)=0$ for all spin$^c$ structures $\fs$.

\begin{prop}\cite[Proposition 3.11.1]{Kronheimer_Mrowka_2007}
Let $X$ be a closed, oriented 4-manifold with $b_2^+(X)\geq 2$, and suppose that $X=X_1\cup X_2$ with $\partial X_1=-\partial X_2=Y$, a connected 3-manifold. If $Y$ is an $L$-space, and $b_2^+(X_1)$ and $b_2^+(X_2)$ are both positive, then $m(X, \fs_X)=0$ for all spin$^c$ structures $\fs_X$. 
\end{prop}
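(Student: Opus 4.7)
The plan is to apply the standard gluing/neck-stretching argument for Seiberg--Witten moduli spaces together with the factorization property of relative invariants. First, I would endow $X$ with a family of metrics $g_T$ containing a cylindrical neck $Y\times[-T,T]$ near the embedded hypersurface $Y$, and analyze what happens to solutions of the Seiberg--Witten equations \hyperref[eq1]{(1)} as $T\to\infty$. By the compactness theorem for trajectories of the downward gradient flow of the Chern--Simons--Dirac functional $\cL$ on $Y$, any family of solutions converges (up to bubbling and gauge, which can be ruled out by generic perturbations) to a pair of finite-energy solutions on the cylindrical-end manifolds $X_1^+$ and $X_2^+$ that are asymptotic at the end to a common critical point of $\grad\,\cL$ in $\cB^\sigma(Y,\fs|_Y)$.

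Second, I would package this neck-stretching analysis into relative invariants. Counting finite-energy solutions on $X_i^+$ asymptotic to given critical points produces, after appropriate perturbation and use of the homology orientation, classes
\[
\psi(X_1,\fs_{X_1})\in\widecheck{HM}_*(Y,\fs|_Y),\qquad \psi(X_2,\fs_{X_2})\in\widehat{HM}^*(Y,\fs|_Y),
\]
and the Kronheimer--Mrowka gluing theorem identifies $\fm(X,\fs_X)$ with the natural pairing $\langle\psi(X_1),\psi(X_2)\rangle$. This is the step where the cylindrical-end analysis has to be done carefully, and is the main technical obstacle; I would quote it from \cite[Chapters 24--27]{Kronheimer_Mrowka_2007} rather than redo it.

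Third, I would use the hypothesis $b_2^+(X_i)>0$ for $i=1,2$ to argue that each relative invariant lies in the image of $j_*$. The positivity of $b_2^+$ on each side supplies a generic self-dual harmonic $2$-form which, when added as a perturbation, prevents the finite-energy Seiberg--Witten solutions on $X_i^+$ from being reducible; standard arguments then show that $\psi(X_i,\fs_{X_i})$ pushes forward, via $j_*$, from a class in $\widecheck{HM}_*$ to a class in the reduced group $HM_*(Y,\fs|_Y)=\mathrm{Im}(j_*)\subset\widehat{HM}_*(Y,\fs|_Y)$. Equivalently, the pairing computing $\fm(X,\fs_X)$ factors through $HM_*(Y,\fs|_Y)$.

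Fourth, the conclusion is immediate from the definition of $L$-space: by Definition \hyperref[2.3]{2.3}, $j_*=0$ for all spin$^c$ structures on $Y$, so the reduced group $HM_*(Y,\fs|_Y)$ is trivial for every restriction $\fs|_Y$. Hence both relative invariants vanish in $HM_*(Y,\fs|_Y)$, and therefore the gluing pairing gives $\fm(X,\fs_X)=0$ for every spin$^c$ structure $\fs_X$ on $X$, as required.
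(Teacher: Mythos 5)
Your proposal follows essentially the same route as the paper's proof: express $\fm(X,\fs_X)$ as a gluing pairing of relative invariants $\psi(X_1)$, $\psi(X_2)$, use $b_2^+(X_i)>0$ to place each in the reduced group $HM_*(Y,\fs)$, and conclude from $j_*=0$ for an $L$-space that this group vanishes. The one expository difference is in the ``lands in the reduced group'' step: you argue geometrically that a generic self-dual perturbation on each cylindrical-end piece rules out reducibles, whereas the paper produces $\psi(X_i)\in\widehat{HM}_*(Y,\fs)$ directly and places it in $\ker(p_*)=\mathrm{Im}(j_*)$ via the commutative square with $p_*$ and the vanishing of $\overline{HM}_*(u_i|W_i,\fs_i)$ for a cobordism $W_i$ with $b_2^+\geq1$ (\cite[Proposition 3.5.2]{Kronheimer_Mrowka_2007}); this is the algebraic packaging of the same underlying fact, and your placement of $\psi(X_1)$ in $\widecheck{HM}_*$ with a push-forward via $j_*$ would need slightly more care to line up with the exact form of the pairing in \cite[Equation 3.22]{Kronheimer_Mrowka_2007}.
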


\begin{proof}
 Since $b_1(Y)=0$, a spin$^c$ structure $\fs_X$ on $X$ is determined by the restrictions $\fs_i=\fs_X|_{X_i}$. This follows from the injectivity of the map $H^2(X;\Z)\rightarrow H^2(X_1;\Z)\oplus H^2(X_2;\Z)$ in the Mayer-Vieoris sequence and that these groups classify spin$^c$ structures. Let $\fs=\fs_X|_Y$. Then it is sufficient to show that $\fm(u|X,\fs_X)=0$ for classes $u=u_1u_2$ where $u_i$ is a cohomology class in the configuration space $\cB(X_i,\fs_i)$ of $X_i$. A cobordism $W$ from $Y_0$ to $Y_1$ induces a map in long exact sequences of monopole floer homology groups (\cite[Section 3.4]{Kronheimer_Mrowka_2007}) and if $b_2^+(W)\geq 1$, we have that $\overline{HM}_*(u|W,\fs)=0$ (\cite[Proposition 3.5.2]{Kronheimer_Mrowka_2007}). So we can define the relative invariant $\psi_{(u_1| X_1,\fs_1)}\in \widehat{HM}_*(Y, \fs)$: let $W_1$ be the cobordism obtained from $X_1$ by removing a ball, and consider the induced map\[
 \widehat{HM}_*(u_1|X_1,\fs_1):\widehat{HM}_*(S^3)=\Z[U]\rightarrow\widehat{HM}_*(Y,\fs).\]
 Then $\psi_{(u_1|X_1,\fs_1)}=\widehat{HM}_*(u_1|W_1,\fs_1)(1)$. We consider the commutative diagram\[
 \begin{tikzcd}
\widehat{HM}_*(S^3) \arrow[r, "p_*"] \arrow[d, "{\widehat{HM}_*(u_1|W_1,\fs_1)}"'] & \overline{HM}_*(S^3) \arrow[d, "{\overline{HM}_*(u_1|W_1,\fs_1)}"] \\
{\widehat{HM}_*(Y,\fs)} \arrow[r, "p_*"]                                           & {\overline{HM}_*(Y,\fs)}                                          
\end{tikzcd}
 \] 
and as $b_2^+(W_1)\geq 1$, the vertical map on the right vanishes, which implies that $\psi_{(u_1|X_1,\fs_1)}\in \ker(p_*)=HM_*(Y,\fs)$. Similarly, using the map induced in homology by $W_2$, we obtain an element $\psi_{(u_2|X_2,\fs_2)}\in HM_*(-Y,\fs)$, which is identified with $HM^*(Y,\fs)$. By the general gluing theorem in \cite[Equation 3.22]{Kronheimer_Mrowka_2007},\[
\fm(u|X,\fs_X)=\langle\psi_{(u_1|X_1,\fs_1)},\psi_{(u_2|X_2,\fs_2)}\rangle,\]
where the angular brackets denote the natural pairing\[
HM_*(Y,\fs)\times HM^*(Y,\fs)\rightarrow \Z.\]
In our assumpions, the group $HM_*(Y,\fs)$ vanishes, so this pairing is zero, and $\fm(u|X,\fs_X)$ is zero for all spin$^c$ structures $\fs_X$.

\end{proof}

The construction of known examples by Agol and Lin  \cite{agol2018hyperbolicfourmanifoldsvanishingseibergwitten} fundamentally relies on the following corollary.

\begin{cor}\cite[Corollary 2.5]{agol2018hyperbolicfourmanifoldsvanishingseibergwitten}\label{2.4}
Suppose $X$ is a 4-manifold with $b_2^+(X)\geq 1$ which admits an embedded non-separating $L$-space $Y$. Then $X$ admits infinitely many covers which have all vanishing Seiberg-Witten invariants.
\end{cor}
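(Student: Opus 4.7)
The plan is to construct infinitely many finite cyclic covers of $X$, each of which admits a decomposition along parallel lifts of $Y$ to which a mild extension of Proposition 2.2 applies.

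Because $Y$ is oriented and non-separating, its fundamental class $[Y]\in H_3(X;\Z)$ is non-zero, so Poincar\'e duality produces a non-trivial class $\alpha\in H^1(X;\Z)\cong [X,S^1]$. A Pontryagin--Thom construction on a bicollar neighbourhood of $Y$ yields a smooth representative $f:X\to S^1$ of $\alpha$ with $f^{-1}(p)=Y$ for a regular value $p$. For each $n\geq 2$, I would let $\pi_n:X_n\to X$ be the cyclic $n$-fold cover obtained by pulling back the connected $n$-fold cover $S^1\to S^1$ along $f$ (passing to a component if $\alpha$ is not primitive). The preimage $\pi_n^{-1}(Y)$ consists of $n$ disjoint copies $Y_0,\ldots,Y_{n-1}$ of $Y$ cyclically arranged, and cutting $X_n$ along their union produces $n$ copies of $W:=\overline{X\setminus \nu(Y)}$ stacked cyclically.

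For any $0<m<n$, I would then cut $X_n$ along the pair $Y_0\sqcup Y_m$, obtaining a decomposition $X_n=A\cup_Z B$ with $Z=Y_0\sqcup Y_m$. Since $Y$ is an $L$-space, so is $Z$: multiplicativity of monopole Floer homology under disjoint union gives $HM_*(Z,\fs)=0$ for every spin$^c$ structure on $Z$, and $b_1(Z)=0$. The proof of Proposition 2.2 goes through verbatim with disconnected $Z$, since connectivity of the separating hypersurface is used there only through these two properties. Hence it suffices to verify $b_2^+(X_n)\geq 2$ together with $b_2^+(A),b_2^+(B)\geq 1$.

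The main technical point is arranging these positivity conditions simultaneously. Since the pullback $\pi_n^*:H^2(X;\R)\hookrightarrow H^2(X_n;\R)$ multiplies the intersection pairing by $n$, one already has $b_2^+(X_n)\geq b_2^+(X)\geq 1$. To upgrade this to $b_2^+(X_n)\geq 2$ and to distribute positivity across $A$ and $B$, I would analyse the Wang exact sequence for the infinite cyclic cover $X_\infty\to X$, equivalently the $\Z/n$-eigenspace decomposition of $H^2(X_n;\R)$, to show that $b_2^+(X_n)\to\infty$ as $n\to\infty$; then choose $m\approx n/2$ and $n$ large so that a basis of positive-definite classes splits roughly evenly between the two halves. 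This distribution step is the main obstacle. Once it is carried out, the generalized Proposition 2.2 yields vanishing of every Seiberg--Witten invariant of $X_n$, and the covers $X_n$ are pairwise non-homeomorphic (distinguished, for example, by their degree over $X$, or by $\chi$ when $\chi(X)\neq 0$), producing the required infinite family.
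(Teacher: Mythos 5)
Your strategy diverges from the paper's at a crucial point, and the divergence is exactly where the gap lies. The paper (following Agol--Lin) also starts from a cyclic cover $\tilde X$ of $X$ and cuts it into pieces $W_1, W_2$ along lifts of $Y$, but it then does one more step: it tubes the two copies of $Y$ together inside $W_1$ via a thickened arc $T$, producing a decomposition $\tilde X = (W_1\setminus T)\cup (W_2\cup T)$ along the \emph{connected} hypersurface $Y\#\overline{Y}$, and invokes a nontrivial result of Lin (cited in the paper) that $Y\#\overline{Y}$ is again an $L$-space. You instead cut along the \emph{disconnected} hypersurface $Z = Y_0\sqcup Y_m$ and claim ``the proof of Proposition 2.2 goes through verbatim with disconnected $Z$, since connectivity is used there only through $HM_*(Z)=0$ and $b_1(Z)=0$.'' That claim is not justified. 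The Kronheimer--Mrowka proposition you invoke is stated for a \emph{connected} 3-manifold, and the underlying machinery — the definition of the relative invariant $\psi_{(u_i|X_i,\mathfrak s_i)}\in\widehat{HM}_*(Y,\mathfrak s)$ via a cobordism from $S^3$, the pairing $HM_*(Y)\times HM^*(Y)\to\Z$, and the gluing formula \cite[Eq.~3.22]{Kronheimer_Mrowka_2007} — is developed in their book only for connected 3-manifolds. Extending the gluing formalism to disconnected boundaries (where the relative invariant lands in a completed tensor product of Floer groups, with a K\"unneth/Tor contribution) is a genuine piece of work, not a cosmetic rewording; it is precisely the difficulty that the tubing trick is designed to sidestep, at the cost of having to prove the connected-sum $L$-space statement. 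So the burden that Agol--Lin place on Lin's connected-sum formula, you are silently placing on an unstated disjoint-union gluing theorem.

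A secondary remark: the $b_2^+$ distribution you flag as ``the main obstacle'' is actually the easy part and needs no Wang-sequence analysis. Since $Y$ is a rational homology sphere, cutting $X$ along $Y$ gives $W$ with $b_2^+(W)\geq b_2^+(X)\geq 1$ (Novikov additivity plus $H^1(Y;\R)=0$). The pieces $A$ and $B$ are each unions of at least one copy of $W$ glued along $\Q$HS boundaries, so $b_2^+(A), b_2^+(B)\geq 1$ and $b_2^+(X_n)\geq b_2^+(A)+b_2^+(B)\geq 2$ automatically for any $0<m<n$. The real obstacle is the one above: either carry out the tubing construction to reduce to a connected $L$-space $Y\#\overline{Y}$ as in the paper, or supply a reference/proof for the gluing theorem along a disconnected $L$-space.
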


\begin{proof}
We consider the double cover $\tilde{X}$ of $X$ formed by gluing together two copies $W_1$ and $W_2$ of the cobordism from $Y$ to obtained by cutting $X$ along $Y$. Consider a properly embedded path $\gamma\subset W_1$ between the two copies of $Y$, and denote by $T$ its tubular neighborhood. Then we have the decomposition $X=$($W_1\setminus T$)$\cup$($W_2\cup T$), where the two manifolds are glued along a copy of $Y\#\overline{Y}$. 

By \cite[Section 4]{Lin_2017}, the Heegaard floer chain complexes have vanishing differentials in a suitable sense, so the connected sum also gives a chain complex with trivial differential, which implies the Heegaard Floer homology is again minimal and $Y\#\overline{Y}$ is an $L$-space. And both $W_1\setminus T$ and $W_2\cup T$ have $b_2^+\geq 1$, so we conclude. 
\end{proof}

\section{Embedding arithmetic hyperbolic manifolds}
In this section, we discuss the embedding arithmetic hyperbolic 3-manifolds as totally geodesic hypersurfaces into arithmetic hyperbolic 4-manifolds in \cite{Kolpakov_2018}. We first review the definitions of admissible quadratic forms and arithmetic groups of simplest type.

Let $k$ be a totally real number field of degree $d$ over $\Q$ with a fixed embedding into $\R$ and the ring of integers $R_k$, and let $V$ be an $(n+1)$-dimensional vector space over $k$ equipped with a non-degenerate quadratic form f defined over $k$ which has signature $(n,1)$ at the fixed embedding, and signature $(n+1,0)$ at the remaining $d-1$ embedding. 

The quadratic form f is equivalent over $\R$ to the quadratic form $x_0^2+x_1^2+\cdots+x_{n-1}^2-x_n^2$, and for any non-identity Galois embedding $\sigma:k\rightarrow\R$, the quadratic form f$^\sigma$ is equivalent over $\R$ to $x_0^2+x_1^2+\cdots+x_{n-1}^2+x_n^2$. We call such a quadratic form \textit{admissible}.

Let $F$ be the symmetric matrix associated with the quadratic form f and let O(f) denote the linear algebraic groups defined over $k$ defined as:\[
\begin{split}
  &\mathrm{O(f)}=\{X\in \mathrm{GL}(n+1,\C):X^tFX=F\}\quad \mathrm{and} \\
 &\mathrm{SO(f)}=\{X\in \mathrm{SL}(n+1,\C):X^tFX=F\}  
\end{split}
\] 

Let $J_n$ denote both the quadratic form $x_0^2+x_1^2+\cdots x_{n-1}^2-x_n^2$, and the diagonal matrix associated with the form. The hyperbolic space $\H^n$ can be identified with the upper half-sheet $\{x\in \R^{n+1}:J_n=-1,\,x_n>0\}$ of the hyperboloid, and we let\[
\mathrm{O(n,1)}=\{X\in \mathrm{GL}(n+1,\R):X^t J_n X=J_n\}.\]
We can also identify Isom($\H^n$) with the subgroup of O(n,1) preserving the upper half-sheet of the hyperboloid $\{x\in\R^{n+1}:J_n=-1\}$, denoted by O$^+$(n,1).

\begin{defn}
Let $G$ be a group, $H_1$, $H_2\leq G$ be subgroups. We say that $H_1$ is commensurable in $G$ with $H_2$ if $[H_1:H_1\cap H_2]<\infty$, $[H_2:H_1\cap H_2]<\infty$.
\end{defn}

Given an admissible quadratic form defined over $k$ of signature $(n,1)$, there exists $T\in\mathrm{GL(n+1},\R)$ such that $T^{-1}\mathrm{O(f,}\R)T=\mathrm{O(n,1)}$.
\begin{defn}
A subgroup  $\Gamma<$ O$^+$(n,1) is called arithmetic of simplest type if $\Gamma$ is commensurable with the image in O$^+$(n,1) of an arithmetic subgroup of O(f) under the conjugation map above.
\end{defn}
\begin{defn}
Let $G$ be a group. Then $G^{(2)}=\langle g^2|g\in G\rangle$.
    
\end{defn}

The following proposition allows one to embed an arithmetic group in arithmetic groups with higher dimensions.

\begin{prop}\cite[Corollary 4.2]{Kolpakov_2018}\label{3.4}
Let $\Gamma$ be an arithmetic subgroup of $\mathrm{O}^+\mathrm{(n,1)}$ of simplest type arising from an admissible quadratic form $\mathrm{f}$ of signature $(n,1)$ defined over a totally real field $k$. Suppose that there is an admissible quadratic form $\mathrm{g}$ of signature $(n+1,1)$ defined over the same field $k$, with $\mathrm{O(f)<O(g)}$. Then:
\begin{enumerate}[(1)]
\item If $n$ is even, $\Gamma$ embeds in an arithmetic subgroup of $\mathrm{O^+(n+1,1)}$ of simplest type.
\item If $n$ is odd, $\Gamma^{(2)}$ embeds in an arithmetic subgroup of $\mathrm{O^+(n+1,1)}$ of simplest type.
\end{enumerate}
\end{prop}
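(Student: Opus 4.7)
The plan is to descend the algebraic inclusion $\mathrm{O(f)}<\mathrm{O(g)}$ over $k$ to an inclusion of arithmetic subgroups, then analyze compatibility with the $\mathrm{O}^+$ condition at the distinguished real embedding, and finally address the parity of $n$ separately.

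Passing to $R_k$-points gives $\mathrm{O(f)}(R_k)\hookrightarrow\mathrm{O(g)}(R_k)$. A signature comparison at every real place, using admissibility of $\mathrm{f}$ and $\mathrm{g}$, shows that the orthogonal complement of $\mathrm{f}$ inside $\mathrm{g}$ is a one-dimensional totally positive subspace over $k$. Accordingly, at the distinguished real embedding one may choose conjugations $T_f\in\mathrm{GL}(n+1,\R)$ and $T_g\in\mathrm{GL}(n+2,\R)$ with $T_g$ extending $T_f$, so that the inclusion realizes as the standard block embedding $A\mapsto A\oplus 1$ of $\mathrm{O}(n,1)$ into $\mathrm{O}(n+1,1)$. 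Because the time coordinate is preserved, this embedding sends $\mathrm{O}^+(n,1)$ into $\mathrm{O}^+(n+1,1)$, so $\Lambda:=T_f\,\mathrm{O(f)}(R_k)\,T_f^{-1}\cap\mathrm{O}^+(n,1)$ --- an arithmetic subgroup of simplest type commensurable with $\Gamma$ --- injects into $\Lambda':=T_g\,\mathrm{O(g)}(R_k)\,T_g^{-1}\cap\mathrm{O}^+(n+1,1)$, which is an arithmetic subgroup of simplest type of $\mathrm{O}^+(n+1,1)$.

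To reach all of $\Gamma$, I apply the continuous block map $\iota:\mathrm{O}^+(n,1)\to\mathrm{O}^+(n+1,1)$ directly; the image $\iota(\Gamma)$ is discrete in $\mathrm{O}^+(n+1,1)$ because it stabilizes a totally geodesic copy of $\mathbb{H}^n\subset\mathbb{H}^{n+1}$. The task is to fit $\iota(\Gamma)$ (or $\iota(\Gamma^{(2)})$) inside an arithmetic subgroup of simplest type. Since $\Gamma\subset\mathrm{Comm}_{\mathrm{O}^+(n,1)}(\Lambda)$, I would first verify the analogous containment $\iota(\Gamma)\subset\mathrm{Comm}_{\mathrm{O}^+(n+1,1)}(\Lambda')$, and then enlarge $\Lambda'$ inside its commensurator to absorb $\iota(\Gamma)$. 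The parity of $n$ enters precisely at this enlargement step: tracking the action of a coset representative of $\Gamma$ over $\Gamma\cap\Lambda$ on an $R_k$-basis of $V_g$ extending one for $V_f$, the obstruction to absorbing it is controlled by a character of order at most $2$, built from determinant and spinor-norm type invariants. For even $n$ this character turns out to be trivial, so $\iota(\Gamma)$ fits directly; for odd $n$ it may be nontrivial, but every order-$2$ character vanishes on squares, so $\iota(\Gamma^{(2)})$ still fits.

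The main obstacle is this parity-sensitive character computation: identifying the relevant character explicitly and checking its vanishing on coset representatives of $\Gamma$ (or their squares). All other steps --- the algebraic inclusion, the conjugation to standard form at the distinguished real place, the preservation of the positive time cone under the block embedding, and the discreteness of $\iota(\Gamma)$ --- reduce to routine bookkeeping on algebraic groups and their $R_k$-points.
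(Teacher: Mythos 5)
The paper does not reprove this statement; it quotes it directly from Kolpakov--Reid--Slavich, so the only thing to assess is whether your blind reconstruction is on solid footing. Your first two paragraphs are fine: the algebraic inclusion $\mathrm{O(f)}<\mathrm{O(g)}$ over $k$ does descend to $R_k$-points, the conjugations $T_f,T_g$ can be chosen compatibly so that the block map preserves the time cone, and this produces a finite-index piece of $\Gamma$ landing inside an arithmetic group of simplest type associated to $\mathrm{g}$. The problems are in the third paragraph.

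First, the step ``enlarge $\Lambda'$ inside its commensurator to absorb $\iota(\Gamma)$'' is a genuine gap as written. An element of the commensurator need not normalize any finite-index subgroup of $\Lambda'$, and the group generated by a lattice together with a commensurating element need not be discrete at all. What saves the day in this situation is more specific arithmetic input: after conjugation $\Gamma\subset\mathrm{O(f},k)$, and since $\Gamma$ has finitely many cosets over $\Gamma\cap\Lambda$, one can produce an $R_k$-lattice in $V_g$ stabilized by $\iota(\Gamma)$ (take the $R_k$-span of the $\Gamma$-orbit of a lattice in $V_f$, direct sum with $R_k e_0$), and then $\iota(\Gamma)$ sits inside the stabilizer of that lattice, which is arithmetic of simplest type. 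You need to say something like this explicitly; ``enlarge inside the commensurator'' is not a proof.

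Second, and more importantly, your account of where the parity of $n$ enters is not correct. You attribute it to an ``order-$2$ character built from determinant and spinor-norm type invariants'' on coset representatives of $\Gamma$ modulo $\Gamma\cap\Lambda$. In Kolpakov--Reid--Slavich the dichotomy is not detected by a character of the finite coset group; it comes from the behavior of the center of $\mathrm{O(f)}$ under the inclusion into $\mathrm{O(g)}$ (equivalently, into $\mathrm{SO(g)}$ after the determinant twist $A\mapsto\mathrm{diag}(\det A,A)$). The key point is that $-I_{n+1}$ is sent to $\mathrm{diag}((-1)^{n+1},-I_{n+1})$, which equals $-I_{n+2}$ exactly when $n$ is even and fails to be $\pm I_{n+2}$ when $n$ is odd; this is what controls whether the map between the projectivized groups is well defined, and the passage to $\Gamma^{(2)}$ for odd $n$ is designed to kill the resulting $\mathbb{Z}/2$ ambiguity coming from $\{\pm I\}$. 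Your ``parity-sensitive character computation'' does not identify this mechanism, and as phrased it is not even clear that the order-$2$ character you posit is well defined on $\Gamma/(\Gamma\cap\Lambda)$ (which need not be a group). This is the heart of the lemma, so leaving it as ``to be checked'' is a substantial gap.
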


 From the above proposition, we can simplify the problem to find an admissible quadratic form g so that $\mathrm{O(f)<O(g)}$. Here we need only to consider the case $k\neq \Q$ for our purpose.

\begin{lem}\cite[Proposition 5.1]{Kolpakov_2018}
Suppose that $\mathrm{f}$ is represented by the admissible diagonal quadratic form $a_0x_0^2+a_1x_1^2+\cdots+a_{n-1}x_{n-1}^2-bx_n^2$ over the field $k\neq \Q$, where $a_i\in R_k$ are all positive and square free for $i=0,...,n-1$, and $b\in R_k$ is positive and square free. Then there is an admissible diagonal quadratic form $\mathrm{g}$ of signature $(n+1,1)$ with $\mathrm{O(f)<O(g)}$.
\end{lem}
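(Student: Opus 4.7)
The plan is to construct $\mathrm{g}$ by augmenting $\mathrm{f}$ with a single diagonal term $c y^2$ for a suitable $c\in R_k$. Specifically, I would set
\[
\mathrm{g}(x_0,\ldots,x_{n-1},y,x_n)=a_0x_0^2+\cdots+a_{n-1}x_{n-1}^2+cy^2-bx_n^2.
\]
At the distinguished real embedding of $k$, the form $\mathrm{f}$ contributes signature $(n,1)$ by hypothesis; the new term $cy^2$ contributes one additional positive eigenvalue whenever $c>0$ at this embedding, so $\mathrm{g}$ has signature $(n+1,1)$ as required.

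For $\mathrm{g}$ to be admissible, its Galois conjugate $\mathrm{g}^\sigma$ must be positive definite at every non-identity embedding $\sigma\colon k\hookrightarrow \R$. Admissibility of $\mathrm{f}$ already supplies $a_i^\sigma>0$ for each $i$ and $-b^\sigma>0$, so positive definiteness of $\mathrm{g}^\sigma$ reduces to the single extra condition $c^\sigma>0$. Hence it suffices to pick any totally positive $c\in R_k$; the canonical choice $c=1$ works uniformly in any totally real field, and I would adopt it.

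The inclusion $\mathrm{O(f)}<\mathrm{O(g)}$ is then immediate: extending $X\in \mathrm{O(f)}$ to the block matrix $\mathrm{diag}(X,1)$, with the extra row and column inserted in the slot of the new coordinate $y$, produces an element of $\mathrm{O(g)}$, and the extension is a morphism of algebraic groups defined over $k$. Together with the admissibility verification above, this yields the required $\mathrm{g}$.

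The main point, such as it is, is just to arrange that the sign constraints of admissibility are met at every embedding simultaneously; but because $1$ is trivially totally positive, this is a bookkeeping check rather than a genuine obstacle. The hypothesis $k\neq \Q$ is not actually needed for this augmentation step itself -- it enters only through the presumed existence of $\mathrm{f}$, since admissibility forces an element $b\in R_k$ that is positive at one embedding and negative at all others, which in turn forces $[k\colon \Q]\geq 2$.
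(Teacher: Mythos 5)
Your construction is correct and is essentially the same as the paper's: append one new diagonal coordinate with a totally positive coefficient (the paper writes $\mathrm{g} = d y^2 + \mathrm{f}$ with $d$ square-free, you take $c = 1$) and verify signature $(n+1,1)$ at the distinguished embedding, positive definiteness at the others, and the block-diagonal inclusion $\mathrm{O(f)} < \mathrm{O(g)}$. The paper's proof additionally normalizes $a_0 = 1$ (harmless, since $\mathrm{O}(\lambda \mathrm{f}) = \mathrm{O(f)}$) and records that $\mathrm{O(g}, R_k)$ is cocompact --- the latter is not part of the stated conclusion but is used downstream to obtain closed manifolds, so it is worth carrying along.

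Your final remark about the role of $k \neq \Q$ is, however, mistaken. Over $\Q$ there is exactly one real embedding, so admissibility reduces to ``$\mathrm{f}$ has signature $(n,1)$'' with no further sign constraints; for example $x_0^2 + \cdots + x_{n-1}^2 - x_n^2$ is admissible over $\Q$. The requirement ``positive at the distinguished embedding and negative elsewhere'' applies to $b$ only when there \emph{are} other embeddings, so it does not force $[k:\Q] \geq 2$. The actual role of $k \neq \Q$ is that it makes every admissible form anisotropic over $k$, hence $\mathrm{O(g}, R_k)$ cocompact and the associated orbifold closed; the paper's own Remark after the lemma states explicitly that the lemma itself remains true over $\Q$.
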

\begin{proof}
If f is anisotropic over $k$, that is, f$(x)=0$ if and only if $x=0$, then we can assume that $b\neq a_i$ for $i=0,...,n-1$. Since O($\lambda$f)=O(f) for all $\lambda\in k^*$, we can multiply f by $a_0^{-1}$ and assume that $a_0=1$, and also that all coefficients are square-free. Then we can take g$=dy^2+$f so that $d$ is square-free in $k$, which will be a quadratic form over $k$. And O(g,$R_k$) is cocompact, as follows from \cite[Proposition 6.4.4]{morris2015introductionarithmeticgroups}.
 \end{proof}

 \begin{rem}
 The above lemma is also true for $k=\Q$, but we don't need to discuss this case here.
 \end{rem}

The general case can be reduced to the diagonal case by taking some $T\in \mathrm{GL(n+1},k)$ so that $T^{-1}\mathrm{O(f)}T=\mathrm{O(f_0)}$ for some admissible diagonal quadratic form $\mathrm{f_0}$ defined over $k$(see \cite{lam2005algebraic}). Then we can extend $T$ to define a matrix\[
\widehat{T}=\left(
\begin{array}{l}
1\quad|\quad0\\
0\quad|\quad T
\end{array}
\right)\in \mathrm{GL(n+2},k)
\]  
which provides an equivalence of the diagonal form $\mathrm{g_0}$ to an admissible quadratic form g with $\mathrm{O(f)<O(g)}$.

Let $M=\H^n/\Gamma$ be an orientable arithmetic hyperbolic $n$-manifold of simplest type and if $n$ is odd, we replace $M$ by $M^{(2)}=\H^n/\Gamma^{(2)}$. By Proposition \hyperref[3.4]{3.4}, there exists an arithmetic lattice $\Lambda$ of simplest type in $\mathrm{SO^+(n+1,1)}$ such that $\Gamma < \Lambda$. Then we can find a torsion-free subgroup $\Lambda_1<\Lambda$ with $\Gamma<\Lambda_1$ and $\Lambda_1$ is GFERF  (\cite[Proposition 7.1 and Theorem 6.2]{Kolpakov_2018}) . In this case, Scott\cite{Scott1978} proved that $M$ embeds in a finite sheeted cover of $\H^{n+1}/\Lambda_1$. Combining these results, we obtain the following theorem:
\begin{thm}\label{3.7}\cite[Theorem 1.1]{Kolpakov_2018}
Let $M=\H^n/\Gamma$ \rm{(}$n\geq 2$\rm{)}  be an orientable arithmetic hyperbolic $n$-manifold of simplest type.
\begin{enumerate}
\item If $n$ is even, $M$ embeds as a totally geodesic submanifold of an orientable arithmetic hyperbolic $(n+1)$-manifold $W$.
\item  If $n$ is odd, the manifold $M^{(2)}=\H^n/\Gamma^{(2)}$ embeds as a totally geodesic submanifold of an orientable arithmetic hyperbolic $(n+1)$-manifold $W$.
\end{enumerate}

Moreover, when $M$ is not defined over $\Q$ (and is therefore closed), the manifold $W$ can be taken to be closed.
\end{thm}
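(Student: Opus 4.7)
The plan is to follow the outline sketched in the paragraph immediately before the theorem. After a preliminary normalization in the odd-$n$ case (replacing $\Gamma$ by $\Gamma^{(2)}$ and $M$ by $M^{(2)}$), the argument has three substantive steps: construct a larger admissible quadratic form over the same field $k$, pass to a torsion-free GFERF finite-index subgroup that still contains the original $\Gamma$, and then apply Scott's separability argument to promote the resulting totally geodesic immersion of $M$ to an embedding in a finite cover.

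First I would produce an admissible quadratic form $\mathrm{g}$ of signature $(n+1,1)$ over the same field $k$ with $\mathrm{O(f)<O(g)}$. When $\mathrm{f}$ is already diagonal this is exactly Lemma 3.5; in general I would diagonalize $\mathrm{f}$ over $k$ by some $T\in\mathrm{GL}(n+1,k)$, apply Lemma 3.5 to the resulting diagonal form $\mathrm{f_0}$ to obtain $\mathrm{g_0}$, and transport back by the block matrix $\widehat{T}$ introduced in the remark following Lemma 3.5. Feeding this $\mathrm{g}$ into \fullref{3.4} yields an arithmetic lattice $\Lambda<\mathrm{O^+(n+1,1)}$ of simplest type with $\Gamma<\Lambda$ (or $\Gamma^{(2)}<\Lambda$ when $n$ is odd), acting on $\H^{n+1}$ in such a way that the copy of $\H^n$ stabilized by $\Gamma$ sits inside as a totally geodesic hyperplane.

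Next I would promote $\Lambda$ to a well-behaved finite-index subgroup. Combining Selberg's lemma with the results of Kolpakov--Reid--Slavich cited in the excerpt (their Proposition~7.1 and Theorem~6.2), I would choose a torsion-free finite-index subgroup $\Lambda_1\leq\Lambda$ that still contains $\Gamma$ (resp.\ $\Gamma^{(2)}$) and is GFERF. The GFERF property forces every geometrically finite subgroup of $\Lambda_1$ to be separable; in particular $\Gamma$ is separable in $\Lambda_1$. Scott's classical lifting theorem then supplies a further finite-index subgroup $\Lambda'\leq\Lambda_1$ containing $\Gamma$ for which the totally geodesic hyperplane fixed by $\Gamma$ descends to an embedded submanifold of $W=\H^{n+1}/\Lambda'$, and this submanifold is precisely $M$ (or $M^{(2)}$).

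For the moreover clause, if $M$ is not defined over $\Q$ then $k$ is a proper totally real extension of $\Q$, so by the standard arithmeticity criterion the admissible form $\mathrm{f}$ is anisotropic and $\Gamma$ is cocompact. The construction in the first step keeps $k$ unchanged, and Lemma 3.5 (together with its generalization via $\widehat{T}$) produces $\mathrm{g}$ defined over the same $k\neq\Q$, hence anisotropic as well; therefore $\Lambda$, $\Lambda_1$, and $\Lambda'$ are all cocompact, and $W$ is closed. The main obstacle I expect is the separability step, which requires arranging torsion-freeness, finite index in $\Lambda$, containment of the given $\Gamma$, and the GFERF property simultaneously; once this is in hand, both the quadratic-form bookkeeping of the first step and Scott's lifting of the third step are routine.
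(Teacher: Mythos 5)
Your proposal follows the paper's own outline essentially verbatim: diagonalize and extend the admissible form via the lemma and the $\widehat{T}$ remark, apply \fullref{3.4} to obtain $\Lambda$, pass to a torsion-free GFERF finite-index $\Lambda_1$ containing $\Gamma$ (resp.\ $\Gamma^{(2)}$), and invoke Scott's separability/lifting theorem to get the embedding in a finite cover, with the cocompactness of the extended form over $k\neq\Q$ giving the ``moreover'' clause. The reasoning is correct and is the same argument the paper sketches in the paragraph preceding the theorem.
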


Here the final sentence follows from the fact that closed arithmetic manifolds of simplest type are associated with quadratic forms either over a finite extension $k$ of $\Q$, $k\neq \Q$, or with quadratic forms over $\Q$
which are anisotropic.

\section{Decorated Graphs and Building Blocks}

In this section, we present the construction methods in \cite{gelander2014countingcommensurabilityclasseshyperbolic}, involving decorated graphs and building blocks.

Let $F$ denote the free group generated by two non-commuting elements $\{a,b\}$. The \textit{Schreir graph} $\Gamma_H$ corresponding to a subgroup $H\leq F$ is the quotient of the Cayley graph of $F$ by the natural action of $H$. Thus a Schreir graph is a 4-regular graph with oriented edges that are labeled by the set $\{a^{\pm},b^{\pm}\}$.

\begin{defn}\cite[Definition 2.1]{gelander2014countingcommensurabilityclasseshyperbolic}
A decorated graph is a 4-regular graph $\Gamma$ with oriented edges labeled by $\{a^{\pm},b^{\pm}\}$ whose vertices are 2-colored, and we will refer to each vertex as either colored or not.

A covering map of decorated graphs is a topological graph covering that preserves both the edge orientations and labels, and the vertex coloring.
\end{defn}

Decorated graphs with a single colored vertex are exactly Schreir graphs for finite index subgroups of $F_2$, since we can identify the single colored vertex with the identity element $\bar{e}$ in the quotient group, and obtain the corresponding finite index subgroup $H$ by the covering space theory.

\begin{defn}\cite[Definition 3.1]{gelander2014countingcommensurabilityclasseshyperbolic}
 The building blocks are the six given manifolds with boundaries $V_0$,$V_1$,$A^+$,$A^-$,$B^+$,$B^-$, which satisfy the following properties:
 \begin{itemize}
 \item Each is a complete real hyperbolic $n$-dimensional manifold of finite volume with totally geodesic boundary.

 \item $V_0$ and $V_1$ have 4 boundary components each, while $A^{\pm}$ and $B^{\pm}$ have 2 boundary components each.

 \item  Every boundary component of any of the above manifolds is isometric to a fixed $(n-1)$-dimensional complete finite-volume manifold $N$.

 \item  The six manifolds are embedded in respectively six manifolds without boundary, which are arithmetic and pairwise non-commensurable.
 \end{itemize}

 And given a decorated graph $\Delta$, we let $M_{\Delta}$ denote a manifold obtained by associating a copy of either $V_0$ or $V_1$ for each vertex in $\Delta$ according to its color, and a copy of the pair $A^+$ and $A^-$ or the pair $B^+$ and $B^-$ for every edge of $\Delta$ according to its label and orientation, and gluing them according to the graph incidence relation by identifying corresponding isometric copies of $N$.

 We refer to the isometric copies of $V_0,...,B^-$ inside $M_{\Delta}$ as the building block submanifolds.

\end{defn}

Working with decorated graphs will be useful in ruling out common covering spaces.

\begin{prop}\cite[Proposition 2.2]{gelander2014countingcommensurabilityclasseshyperbolic}\label{4.3}

Let $\Gamma_1$ and $\Gamma_2$ be two finite decorated graphs, each having a single colored vertex. If $\Gamma_1$ and $\Gamma_2$ are not isomorphic then they do not have a common decorated cover.
\end{prop}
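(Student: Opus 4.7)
The plan is to reduce the statement to group theory via the Schreier-graph dictionary already sketched in the paper: a finite decorated graph $\Gamma$ with a single colored vertex $v$, pointed at $v$, corresponds to a unique finite-index subgroup $H\leq F$, where $F=F(a,b)$. Concretely, $H$ is the set of words $w\in F$ such that the oriented, labeled walk starting at $v$ and spelling $w$ returns to $v$; the graph itself is then recovered as the Schreier coset graph $H\backslash F$, with the coset $H$ as its unique colored vertex. Because a pointed decorated graph determines, and is determined by, its subgroup, showing $\Gamma_1\cong\Gamma_2$ reduces to showing $H_1=H_2$, where $(\Gamma_i,v_i)\leftrightarrow H_i$.

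Here is how I would carry out the argument. Let $p_i:\tilde\Gamma\to\Gamma_i$ be the covering maps; after restricting to a connected component, one may assume $\tilde\Gamma$ is connected. Pick any colored vertex $\tilde v\in\tilde\Gamma$. Since each $p_i$ preserves the coloring and $\Gamma_i$ has exactly one colored vertex $v_i$, we get $p_i(\tilde v)=v_i$ for both $i$. For each word $w\in F$, let $\tilde u_w\in\tilde\Gamma$ denote the endpoint of the labeled walk of spelling $w$ starting at $\tilde v$; because $p_i$ preserves labels and edge orientations, $p_i(\tilde u_w)$ is the endpoint of the $w$-walk from $v_i$ inside $\Gamma_i$. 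Now invoke coloring preservation in both directions: the vertex $\tilde u_w$ is colored in $\tilde\Gamma$ iff $p_i(\tilde u_w)$ is colored in $\Gamma_i$ iff $p_i(\tilde u_w)=v_i$ iff $w\in H_i$. The leftmost condition is intrinsic to $\tilde\Gamma$ and $\tilde u_w$, independent of $i$, so $w\in H_1\iff w\in H_2$, giving $H_1=H_2$ and hence an isomorphism $\Gamma_1\cong\Gamma_2$ of pointed decorated graphs.

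The main subtle point I anticipate, and which I would nail down first, is the exact meaning of ``preserves vertex coloring'' in the definition of a decorated covering: it must be read as a two-sided condition, namely that $\tilde u$ is colored in $\tilde\Gamma$ if and only if $p(\tilde u)$ is colored in the base. Under the weaker one-sided reading (only $\tilde u$ colored $\Rightarrow p(\tilde u)$ colored), the Cayley graph of $F$ with just its identity vertex colored would cover every single-colored-vertex Schreier graph, and the proposition would fail. Granting the natural two-sided convention, which is what allows the biconditional ``$\tilde u_w$ colored $\iff w\in H_i$'' to transfer between $i=1$ and $i=2$, the walk-based bookkeeping above closes the argument with no further work.
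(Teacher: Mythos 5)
Your proof is correct and follows essentially the same route as the paper's: both identify the single-colored-vertex graphs with Schreier graphs $\Gamma_{H_i}$, track a word $w$ through the common cover, and derive the conclusion from the fact that colored-vertex membership must agree under $p_1$ and $p_2$. Your explicit observation that ``preserves the vertex coloring'' must be read two-sidedly (and your counterexample of the Cayley graph under the one-sided reading) makes precise a point the paper's sketch leaves implicit, since the paper's lift $\bar\gamma$ tacitly needs to start at a vertex mapping to the colored vertex under \emph{both} $p_1$ and $p_2$.
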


\begin{proof}

Since $\Gamma_1$ and $\Gamma_2$ each have a single colored vertex, we may regard them as $\Gamma_{H_i}$ for some finite-index subgroup $H_i\leq F$, $i=1,2.$ Since the two graphs are not isomorphic, we have that $H_1\neq H_2$ as subgroups of $F$.

Assume by contraction $\Gamma_1$ and $\Gamma_2$ have a common decorated cover $\bar{\Gamma}$ with covers $p_i:\bar{\Gamma}\rightarrow \Gamma_i$. We consider some loop $\gamma$ in $\Gamma_1$ based at the colored vertex such that $l(\gamma)\in H_1\setminus H_2$, and let $\bar{\gamma}$ be a lift of $\gamma$ to $\bar{\Gamma}$ and let $x\in \bar{\Gamma}$ denote the end-point of $\bar{\gamma}$. Since $\gamma$ is a loop, the end-point of $p_1\circ\bar{\gamma}=\gamma$ is colored, while the end-point of $p_2\circ \bar{\gamma}$ is not colored since $l(\gamma)\notin H_2$. This contradicts the fact that $p_1$ and $p_2$ are assumed to preserve the decorated structure since both end-points are covered by $x\in\bar{\Gamma}$.
\end{proof}

The following proposition generalizes Proposition \hyperref[4.3]{4.3}, which reduces the problem of constructing non-commensurable manifolds to the problem of constructing non-isomorphic finite decorated graphs.

\begin{prop}\cite[Proposition 3.3]{gelander2014countingcommensurabilityclasseshyperbolic}\label{4.4}
Let $\Delta_1$ and $\Delta_2$ be two finite decorated graphs, each having a single colored vertex. If $\Delta_1$ and $\Delta_2$ are not isomorphic then the manifolds $M_{\Delta_1}$ and $M_{\Delta_2}$ are not commensurable.
\end{prop}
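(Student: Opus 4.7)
The plan is to argue by contradiction. Suppose that $M_{\Delta_1}$ and $M_{\Delta_2}$ are commensurable; then they share a connected common finite hyperbolic cover $\tilde M$ with covering maps $\pi_i : \tilde M \to M_{\Delta_i}$ for $i=1,2$, each of which is a local isometry.

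The first step is to recover a building block decomposition on $\tilde M$. Each $M_{\Delta_i}$ is canonically cut by its embedded totally geodesic copies of $N$ into the building blocks specified by $\Delta_i$, and $\pi_i^{-1}$ of this family is a disjoint union of totally geodesic hypersurfaces in $\tilde M$, each a finite cover of $N$. I would show that the two cutting systems (from $\pi_1$ and $\pi_2$) agree as families of hypersurfaces in $\tilde M$: pushing a component of $\pi_1^{-1}(N)$ forward by $\pi_2$ gives an immersed totally geodesic hypersurface in $M_{\Delta_2}$, and since the building blocks are chosen to contain no extraneous immersed totally geodesic hypersurfaces isometric to covers of $N$ beyond the $N$-walls themselves, this image must be one of the $N$-walls of $M_{\Delta_2}$. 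Consequently, $\tilde M$ inherits a well-defined cutting into pieces, each of which is simultaneously a finite cover of a building block $Q_1 \subseteq M_{\Delta_1}$ and of a building block $Q_2 \subseteq M_{\Delta_2}$.

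The main obstacle is then to show that for each such piece $P \subset \tilde M$ the two building blocks $Q_1$ and $Q_2$ are of the same type (including the label and orientation in the edge case). This is where the arithmetic hypothesis is essential: by construction, each type of building block is embedded in an arithmetic hyperbolic manifold, and these six ambient manifolds are pairwise non-commensurable. By Mostow rigidity, $P$ determines a single conjugacy class of lattice $\pi_1(P) \subset \mathrm{Isom}(\H^n)$ which sits with finite index in $\pi_1(Q_1)$ and, after conjugation, in $\pi_1(Q_2)$, and hence inside the two ambient arithmetic lattices. Applying Margulis' commensurator theorem as in \cite[Proposition 3.3]{gelander2014countingcommensurabilityclasseshyperbolic}, one upgrades this data into a commensurability of the arithmetic closures of $\pi_1(Q_1)$ and $\pi_1(Q_2)$, forcing their ambient arithmetic manifolds to be commensurable. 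This contradicts the choice of building blocks unless $Q_1$ and $Q_2$ are of the same type, with matching label and orientation.

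With matching piece types in hand, I would assemble a decorated graph $\tilde \Delta$ whose vertices are the $V_0$- or $V_1$-type pieces of $\tilde M$ (with coloring inherited from either $\Delta_i$) and whose edges are the $A^{\pm}$- or $B^{\pm}$-type pieces (with label and orientation inherited likewise). The projections $\pi_1, \pi_2$ descend to decorated graph coverings $\tilde \Delta \to \Delta_i$. The argument of Proposition \hyperref[4.3]{4.3} used only that $\Gamma_1$ and $\Gamma_2$ each have a single colored vertex, with no constraint placed on the common cover; applied to $\tilde \Delta \to \Delta_1, \tilde \Delta \to \Delta_2$, it yields $\Delta_1 \cong \Delta_2$, contradicting the hypothesis and completing the proof.
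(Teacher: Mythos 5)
Your overall strategy — show that the two building-block decompositions of the common cover $\tilde M$ coincide with matching types, then extract a decorated graph covering both $\Delta_1$ and $\Delta_2$ and invoke \autoref{4.3} — is the same idea the paper uses. But you package the endgame differently: the paper does not assemble a full decorated cover graph $\tilde\Delta$; instead it builds an explicit closed path $c_\gamma$ in $M_{\Delta_1}$ tracing a loop $\gamma$ with label $l(\gamma)\in H_1\setminus H_2$, lifts it to $\tilde M$, and compares the endpoints of the two projections of the lift to get a contradiction. Your version — noting that $\tilde M$ naturally carries a decorated graph structure $\tilde\Delta$ and that the $\pi_i$ descend to decorated graph coverings $\tilde\Delta\to\Delta_i$, so \autoref{4.3} applies directly — is a cleaner reduction and is valid \emph{once} the matching of decompositions is in place, since \autoref{4.3} places no constraint on the common cover graph.

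The genuine gap is in the first step, where you assert that ``the building blocks are chosen to contain no extraneous immersed totally geodesic hypersurfaces isometric to covers of $N$ beyond the $N$-walls themselves.'' This is not part of the construction (Definition 4.2 requires only the list of properties: finite volume, totally geodesic boundary, prescribed boundary components isometric to a fixed $N$, and embeddings into pairwise non-commensurable arithmetic manifolds), and it is false in general: arithmetic hyperbolic $n$-manifolds of simplest type typically contain infinitely many immersed totally geodesic hypersurfaces, and one cannot expect a building block cut from such a manifold to see only its boundary walls. So ``$\pi_2(\Sigma)$ must be an $N$-wall of $M_{\Delta_2}$'' does not follow from the stated reasons. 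The paper outsources exactly this point to \cite[Lemma 3.5]{gelander2014countingcommensurabilityclasseshyperbolic}, whose proof uses the arithmetic non-commensurability of the six ambient manifolds in a more careful way: roughly, one works with pieces of the common refinement of the two cutting systems, shows via Mostow rigidity and commensurability rigidity of arithmetic lattices that each such piece is assigned the same arithmetic commensurability class by both $\pi_1$ and $\pi_2$, and only then concludes the partitions coincide. Your second step (Mostow plus commensurability rigidity to match types) is the right tool, but note that your citation ``as in \cite[Proposition 3.3]{gelander2014countingcommensurabilityclasseshyperbolic}'' is circular, since Proposition 3.3 is the statement being proved; the relevant input is Lemma 3.5 together with the Zariski density and arithmeticity facts from \cite{PMIHES_1987__66__93_0}.
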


\begin{proof}[Sketch of Proof]
Suppose, by way of contradiction, that $M$ is a common finite cover of both $M_{\Delta_1}$ and $M_{\Delta_2}$ with associated covering maps $\pi_i:M\rightarrow M_{\Delta_i}$. Let $x\in M$ be a point. We can prove that $\pi_1(x)$ belongs to the interior of some building block sub-manifold of $M_{\Delta_1}$ if and only if $\pi_2(x)$ belongs to the interior of a building block of the same type in $M_{\Delta_2}$(see \cite[Lemma 3.5]{gelander2014countingcommensurabilityclasseshyperbolic}).

As in the proof of Proposition \hyperref[4.3]{4.3}, we may write $\Delta_i=\Delta_{H_i}$, and let $\gamma$ be a simple closed loop in $\Delta_1$ of length $k=|\gamma|$ based at the colored vertex with labeling $l(\gamma)\in H_1\setminus H_2$. Fix a point $p$ in the interior of the copy of $V_1$ in $M_{\Delta_1}$. We associate to $\gamma$ a closed path\[
c_{\gamma}:[0,1]\rightarrow M_{\Delta_1}\; \mathrm{with}\; c_{\gamma}(0)=c_{\gamma}(1)=p\]
such that $c_{\gamma}$ intersects the copies of the boundary submanifold $N$ transversely at times\[
0<t_1<\cdots<t_{3k}<1\]
and so that each $c_{\gamma|(t_i,t_{i+1})},0\leq i\leq 3k\,$(with $t_0=0$ and $t_{3k+1}=1$) is contained in the interior of a single building block manifold. Moreover $c_{\gamma}$ traces $\gamma$ in the obvious sense: an edge of type $a^{+1}$ in $\gamma$ corresponds to consecutive segments $[t_i,t_{i+1}],[t_{i+1},t_{i+2}]$ on which $c_{\gamma}$ travels along $A^{-}$ and then along $A^{+}$ from boundary to boundary, where both external boundaries are glued to copies of $V_1$ or $V_0$——depending on whether or not that edge is incident to colored base-point of $\gamma$.

Then we can choose a lift $\tilde{c}_{\gamma}$ of $c_{\gamma}$ to $M$ and compare the end points of two paths $\pi_2\circ\tilde{c}_{\gamma}$ and $\tilde{c}_{\gamma}$ to obtain a contradiction.
\end{proof}

\section{Explicit Constructions}
\subsection{Known Examples}

The construction in \cite{agol2018hyperbolicfourmanifoldsvanishingseibergwitten} starts from the \textit{Fibonacci manifold} $M_n$, the cyclic branched $n$-fold cover over the figure-eight knot. For $n\geq 4$, it is hyperbolic.

 From the proof of Theorem \hyperref[2.3]{2.3}, it suffices to consider the reduced invariants with rational coefficients $HM_*(Y,\fs;\Q)$. And by the universal coefficients theorem on homology, this is implied by the vanishing of $HM_*(Y,\fs;\Z/2\Z)$. Thus, it suffices to show that the computation holds with coefficients in $\Z/2\Z$.

 In \cite{agol2018hyperbolicfourmanifoldsvanishingseibergwitten}, they proved that $M_n$ is an $L$-space with coefficients in $\Z/2\Z$ for all $n$ such that $n\neq0$ mod 3, using the fact that $M_n$ is the branched double cover over the closure of the 3-braid  $(\sigma_1\sigma_2^{-1})^n$ and the surgery exact sequence \cite{kronheimer2004monopoleslensspacesurgeries}. Then they showed that $M_5$ is an arithmetic hyperbolic manifold of simplest type defined by a quadratic form over the field $\Q(\sqrt{5})$.

 Since $M_5$ is a $\Z/2\Z$ homology sphere, for $\Gamma=\pi_1(M)$, $\Gamma^{(2)}=\Gamma$, so $M_5\cong\H^n/\Gamma^{(2)}$ embeds as a totally geodesic submanifold of a close hyperbolic 4-manifold $W$ by Theorem \hyperref[3.7]{3.7}. Since the Euler number of a closed oriented hyperbolic 4-manifold is positive, we may assume that $\chi(W)>2$ by passing to a 4-fold cover and hence $b_2^+(W)>1$. Thus by Corollary \hyperref[2.4]{2.4}, these embed into a closed hyperbolic 4-manifold with vanishing Seiberg-Witten invariants.

\subsection{Construction of new manifolds}Before using the construction method in Section 4, we need to find the six building blocks first.

We recall the following commensurability criterion:

\begin{prop}\cite[Section 2.6]{PMIHES_1987__66__93_0}\label{5.1}
Let $q_1$ and $q_2$ be two quadratic forms of signature $\mathrm{(}n,1\mathrm{)}$ defined over a totally real number field $k$. Assume that every non-trivial Galois conjugate of $q_1$ as well as of $q_2$ is positive definite.

Then the two hyperbolic orbifolds with monodromy groups $SO(q_i,\mathcal{O})$ for $i=1,2$ are commensurable if and only if $q_1$ is isometric over $k$ to $\lambda q_2$ for some $\lambda\in k^*$ $\mathrm{(}i.e.$ $ Aq_1A^t=\lambda q_2$ for some $A\in GL\mathrm{(}n+1,k\mathrm{))}$.
\end{prop}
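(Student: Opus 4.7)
The plan is to prove both directions separately. The forward direction is essentially a change-of-variables argument; the reverse direction relies on Borel density combined with a Galois descent enabled by the hypothesis on Galois conjugates.

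For $(\Leftarrow)$, suppose $A q_1 A^t = \lambda q_2$ with $A \in GL(n+1, k)$ and $\lambda \in k^*$. Since $SO(\lambda q_2) = SO(q_2)$ (scaling does not affect the orthogonal group), conjugation by $A$ induces a $k$-isomorphism $SO(q_1) \xrightarrow{\sim} SO(q_2)$ of algebraic groups. This sends $SO(q_1,\mathcal{O})$ into a subgroup of $SO(q_2,k)$ commensurable with $SO(q_2,\mathcal{O})$, since $A$ has entries in $k$ and only finitely many denominators appear. Passing to real points via the fixed embedding $k \hookrightarrow \R$ realizes this as an element of $O^+(n,1)$ intertwining the two lattices up to finite index, giving commensurability of the orbifolds.

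For $(\Rightarrow)$, assume the orbifolds are commensurable. Then there exists $g \in O^+(n,1)$ such that $SO(q_1,\mathcal{O})$ and $g \cdot SO(q_2,\mathcal{O}) \cdot g^{-1}$ share a finite-index subgroup $\Gamma$. By Borel's density theorem, $\Gamma$ is Zariski dense in both $SO(q_1)_\R$ and $g \cdot SO(q_2)_\R \cdot g^{-1}$, so these two real algebraic groups coincide, and conjugation by $g$ identifies $SO(q_2)_\R$ with $SO(q_1)_\R$ over $\R$.

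The main obstacle is descending this $\R$-equality to a $k$-isomorphism. This is where the hypothesis that every non-trivial Galois conjugate $q_i^\sigma$ is positive definite plays its role: the $\Q$-group $H_i = \mathrm{Res}_{k/\Q} SO(q_i)$ has real points $SO(q_i)_\R$ times a compact factor, so $\Gamma$ sits as an irreducible arithmetic lattice in both $H_1(\R)$ and (after conjugation) $H_2(\R)$. The map $\gamma \mapsto g^{-1}\gamma g$ carries $\Gamma \subset SO(q_1,k)$ into $SO(q_2,k)$; because $\Gamma$ is Zariski dense and the non-identity archimedean places contribute only compact factors, this map extends by a standard Galois-descent argument to a $k$-isomorphism of algebraic groups $SO(q_1) \cong SO(q_2)$. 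Finally, a classical fact in the theory of quadratic forms (see O'Meara, \emph{Introduction to Quadratic Forms}) says that any $k$-isomorphism of special orthogonal groups of non-degenerate quadratic forms of rank $n+1 \geq 3$ is induced by a similitude of the underlying forms, producing $A \in GL(n+1,k)$ and $\lambda \in k^*$ with $A q_1 A^t = \lambda q_2$, as required.
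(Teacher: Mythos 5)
The paper states Proposition~\ref{5.1} as a cited result of Gromov and Piatetski-Shapiro and gives no proof, so there is no internal argument for you to match. Your sketch is a reasonable reconstruction of the standard argument, and the two directions are decomposed the right way. The $(\Leftarrow)$ direction (conjugation by a $k$-rational similitude preserves commensurability of arithmetic lattices) is correct modulo a transpose convention in which group is conjugated. For $(\Rightarrow)$, invoking Borel density to identify the real Zariski closures and then descending the conjugation to a $k$-isomorphism is the right idea. The descent works because the graph of the conjugation morphism contains a Zariski-dense set of $k$-points, hence is stable under $\mathrm{Gal}(\bar{k}/k)$; you assert this somewhat vaguely (``a standard Galois-descent argument''), and it would strengthen the writeup to say this explicitly.

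Two points deserve more care. First, the final step — that a $k$-isomorphism $SO(q_1)\cong SO(q_2)$ forces $q_1$ and $q_2$ to be similar over $k$ — is not literally true in the generality you claim (``rank $n+1\geq 3$''): rank $4$ fails because $SO_4$ is not absolutely simple, and rank $8$ has triality. The clean way to get it in the relevant odd-rank case ($n+1 = 5$ here) is the representation-theoretic argument: $SO(q)$ has a unique $(n+1)$-dimensional irreducible representation, so any $k$-isomorphism is intertwined by some $A\in GL(n+1,\bar{k})$, which descends to $k$ by Schur's lemma and Galois descent, and then $A$ carries the invariant form $q_2$ to a scalar multiple of $q_1$. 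Dieudonné's classification of automorphisms of classical groups is the more standard reference than O'Meara. Second, the hypothesis on the Galois conjugates being positive definite is not used in your descent per se; its role is to ensure $SO(q_i,\mathcal{O})$ really is a lattice in $SO(n,1)$ (all other archimedean factors compact), which is what makes the commensurability statement about orbifolds meaningful and what lets Borel density apply. You mention this, but frame it as enabling the descent, which slightly misattributes where the hypothesis enters.
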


We say two such quadratic forms are similar over $k$. Thus, our attention is restricted to the quadratic forms, and we review some standard definitions and basic results about the quadratic forms in \cite{serre1973course}.

\begin{defn}
Let $k$ be a $p$-adic field, and $a,b\in k^*$. The Hilbert symbol $(a,b)_k$ is defined to be $1$ if the equation $ax^2+by^2=z^2$ has a non-trivial solution in $k$, and $-1$ otherwise. 

Given a quadratic form $q$ of rank $n+1$, define its Hasse-Witt invariant\[
\varepsilon_k(q)=\prod\limits_{i<j}(a_i,a_j)_k\in\{\pm 1\},
\]
where $a_i\in k$ and $q=a_1x_1^2+\cdots+a_{n+1}x_{n+1}^2$ in some orthogonal basis.
\end{defn}

\begin{defn}
For $u\in \Z$ the Legendre symbol $\left(\frac{u}{p}\right)$ is 0 if $u$ is divisible by $p$, 1 if the equation $u=x^2$ has a nonzero solution \rm{mod} $p$ and $-1$ otherwise.

An element $u$ with $\left(\frac{u}{p}\right)=1$ is called a quadratic residue \rm{mod} $p$.
\end{defn}

\begin{prop}\label{5.4}
Let $k$ be a $p$-adic field, and $a,b,c\in k^*$. The Hilbert symbol satisfies
\begin{enumerate}[1.]
\item $(ac,b)_k=(a,b)_k(c,b)_k$,
\item $(a^2,b)_k=1$,
\item $(a,b)_k=(a,-ab)_k$,
\end{enumerate}
and for a local field $k_{\fp}$ and $a,b\in k^{\times}_{\fp}$,
$(a,b)_{k_{\fp}}=-1$ if and only if $a$ is not a square in $k_{\fp}$ and $b$ is not a norm from $k_{\fp}(\sqrt{a})/k_{\fp}.$
\end{prop}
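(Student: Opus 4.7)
The plan is to first establish the final norm-characterization, since the three algebraic properties (1)--(3) will follow from it as elementary corollaries. So I would fix $a\in k_{\fp}^{\times}$, let $K=k_{\fp}(\sqrt{a})$, and let $N_{K/k_{\fp}}(K^{\times})\subset k_{\fp}^{\times}$ denote the image of the norm map. The main claim to prove is: when $a$ is not a square, $(a,b)_{k_{\fp}}=1$ if and only if $b\in N_{K/k_{\fp}}(K^{\times})$; when $a$ is a square, $(a,b)_{k_{\fp}}=1$ for every $b$.

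To unpack the defining equation $ax^2+by^2=z^2$: if $a$ is a square in $k_{\fp}$, the form is isotropic for every $b$, giving $(a,b)_{k_{\fp}}=1$, and $k_{\fp}(\sqrt{a})=k_{\fp}$ makes every element trivially a norm, so the characterization holds. If $a$ is not a square, a nontrivial solution must have $y\neq 0$ (otherwise $a$ would be a square), and then $b=(z^2-ax^2)/y^2=N_{K/k_{\fp}}\!\left((z+x\sqrt{a})/y\right)$. Conversely every element of $N_{K/k_{\fp}}(K^{\times})$ arises this way, so solvability of the Hilbert equation in $b$ is exactly membership of $b$ in the norm group.

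Granting the norm characterization, I would then derive (1)--(3) directly. For (1): if $a$ is a square both sides equal $1$; otherwise $N_{K/k_{\fp}}(K^{\times})$ has index $2$ in $k_{\fp}^{\times}$, so the quotient map $k_{\fp}^{\times}\to k_{\fp}^{\times}/N_{K/k_{\fp}}(K^{\times})\cong\Z/2\Z$ is a group homomorphism, and under the characterization this composition is precisely $b\mapsto (a,b)_{k_{\fp}}$, giving bilinearity in the first slot (symmetry of the Hilbert symbol being built into the defining equation). Property (2) holds because $a^2$ is always a square. For (3), I would first verify $(a,-a)_{k_{\fp}}=1$ via the nontrivial solution $(x,y,z)=(1,1,0)$ of $ax^2-ay^2=z^2$, after which (1) yields $(a,-ab)_{k_{\fp}}=(a,-a)_{k_{\fp}}(a,b)_{k_{\fp}}=(a,b)_{k_{\fp}}$.

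The hard part will be the norm characterization itself. The subtle point is not the easy direction — every value produced by the Hilbert form is visibly a norm — but rather verifying that $N_{K/k_{\fp}}(K^{\times})$ is already a subgroup of index exactly $2$ in $k_{\fp}^{\times}$, so that $b\mapsto (a,b)_{k_{\fp}}$ takes only the two values $\pm 1$ with no further collapsing. This requires genuine local-field input, most cleanly extracted from local class field theory together with the explicit description of $k_{\fp}^{\times}/(k_{\fp}^{\times})^2$, and I would cite \cite{serre1973course} rather than reproduce the argument here.
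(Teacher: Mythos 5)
The paper gives no proof of this proposition: it is stated as a compendium of standard facts about the $p$-adic Hilbert symbol, implicitly taken from Serre's \textit{A Course in Arithmetic} (which the paper cites for the adjacent results on Hasse invariants). So there is no in-paper argument to compare against; your job was effectively to reconstruct Serre's treatment, and you do so accurately. Proving the norm characterization first is indeed how Serre proceeds (Ch.\ III, \S 1, Thm.\ 1), and your verification of both directions is right: a nontrivial solution of $ax^2+by^2=z^2$ with $a$ a nonsquare must have $y\neq 0$, exhibiting $b$ as the norm $N_{K/k_\fp}\!\left((z+x\sqrt{a})/y\right)$, and conversely. You also correctly isolate the one substantive local-field input, namely that $N_{K/k_\fp}(K^\times)$ is a subgroup of index (at most) $2$ in $k_\fp^\times$, which is exactly what makes the symbol multiplicative rather than merely a set-theoretic indicator; deferring that point to Serre (or to local class field theory) is reasonable and matches the level at which the paper is citing these facts.

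One wording slip worth fixing: in deriving (1) you write ``if $a$ is a square both sides equal $1$,'' but if $a$ is a square the identity $(ac,b)=(a,b)(c,b)$ reduces to $(c,b)=(c,b)$, not to $1=1$. What the norm characterization directly yields is second-slot additivity $(a,b_1b_2)=(a,b_1)(a,b_2)$ --- trivial when $a$ is a square, and a consequence of the index-$\le 2$ norm subgroup when $a$ is not --- and you then need the symmetry $(a,b)=(b,a)$, immediate from the defining quadratic form, to transport this to the first slot and obtain (1). You allude to symmetry parenthetically, but you should make that route explicit; otherwise your derivation of (3), which invokes ``(1)'' but actually applies additivity in the second argument ($(a,-ab)=(a,-a)(a,b)$), reads as a small non sequitur.
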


Hasse invariants can be used to detect the non-commensurability of quadratic forms over $p$-adic field:

\begin{prop}\cite[ Chapter IV, Section 2]{serre1973course}
For two quadratic forms $q_1$ and $q_2$ over the field $k$, a finite extension of $\Q_p$, if $q_1$ is similar $q_2$ over $k$, they have the same Hasse invariant.
\end{prop}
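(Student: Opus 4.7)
The plan is to reduce the statement to a direct computation of how the Hasse-Witt invariant transforms under scaling. First I would unpack the hypothesis that $q_1$ is similar to $q_2$ to mean $q_1 \cong \lambda q_2$ as isometric forms for some $\lambda \in k^*$. Since the Hasse-Witt invariant $\varepsilon_k$ is a standard invariant of the isometry class of a quadratic form (independent of the chosen orthogonal diagonalization, a fact from Serre one may cite), it suffices to prove $\varepsilon_k(\lambda q) = \varepsilon_k(q)$ for every $\lambda \in k^*$ and every diagonal $q = \sum_{i=1}^{m} a_i x_i^2$.

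The central calculation is to expand $(\lambda a_i, \lambda a_j)_k$ by iterating the bilinearity clause of Proposition 5.4:
\[
(\lambda a_i, \lambda a_j)_k = (\lambda,\lambda)_k\,(\lambda, a_j)_k\,(a_i, \lambda)_k\,(a_i, a_j)_k,
\]
and then take the product over all $i<j$. Careful bookkeeping shows that the $(a_i,a_j)_k$ factors recover $\varepsilon_k(q)$; the $(\lambda,\lambda)_k$ factor contributes $(\lambda,\lambda)_k^{\binom{m}{2}}$; and the two cross-term products combine, using symmetry $(\lambda, a)_k = (a, \lambda)_k$ and the observation that each index $i$ is paired with every other index exactly once, into $\prod_{i=1}^{m}(a_i,\lambda)_k^{m-1} = (d,\lambda)_k^{m-1}$, where $d = \prod_i a_i$. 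This yields the transformation formula
\[
\varepsilon_k(\lambda q) = \varepsilon_k(q)\cdot (d,\lambda)_k^{m-1}\cdot (\lambda,\lambda)_k^{\binom{m}{2}}.
\]

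Because the Hilbert symbol takes values in $\{\pm 1\}$, each correction factor is killed whenever its exponent is even. In the setting of the paper the forms have signature $(n,1)$ with $n=4$, so $m = 5$, giving $m-1 = 4$ and $\binom{m}{2} = 10$, both even; the corrections are trivial and $\varepsilon_k(\lambda q) = \varepsilon_k(q)$ follows at once. The only real obstacle is the exponent bookkeeping in the double product when redistributing the four factors above, and that is purely combinatorial: once one verifies that each $(a_i,\lambda)_k$ appears exactly $m-1$ times across the two cross-term subproducts, the conclusion is forced by the multiplicativity and symmetry properties of the Hilbert symbol already recorded in Proposition 5.4. I would then close by remarking that this is enough for the paper's later purpose of ruling out commensurability via Proposition 5.1 and the Hasse-invariant obstruction.
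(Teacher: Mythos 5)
Your calculation is correct, and it supplies a proof that the paper itself does not give: Serre's Chapter IV, Section 2 only establishes that the Hasse--Witt invariant $\varepsilon_k$ is an invariant of the \emph{isometry} class of a form (this is needed to make $\varepsilon_k$ well-defined from a diagonalization, and feeds into his Theorem 5). The passage from isometry to \emph{similarity} --- handling the scaling $q \mapsto \lambda q$ --- is exactly what your transformation formula
\[
\varepsilon_k(\lambda q) = \varepsilon_k(q)\cdot (d,\lambda)_k^{\,m-1}\cdot(\lambda,\lambda)_k^{\binom{m}{2}}
\]
accomplishes, and your exponent bookkeeping (each $(a_l,\lambda)_k$ appearing $(l-1)+(m-l)=m-1$ times across the two cross-term subproducts) is right. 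Since the paper's proof consists solely of the citation to Serre, your argument is not a different route so much as the actual content that the citation omits.

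One thing you should flag more forcefully: the proposition, as stated in the paper for quadratic forms of arbitrary rank, is \emph{false}, and your own formula shows why. The correction factors survive whenever $m-1$ or $\binom{m}{2}$ is odd; invariance under similarity holds identically precisely when $m \equiv 1 \pmod 4$. Concretely, over $\Q_2$ the rank-$2$ forms $x^2+y^2$ and $-x^2-y^2$ are similar (take $\lambda=-1$) yet $\varepsilon_{\Q_2}(x^2+y^2)=1$ while $\varepsilon_{\Q_2}(-x^2-y^2)=(-1,-1)_{\Q_2}=-1$. The forms $q_d = dy^2 + b_1x_1^2+\cdots+b_4x_4^2$ appearing in the paper have rank $m=5$, so the application is sound and your restriction to $m=5$ is exactly what is needed; but the proposition ought to carry a rank hypothesis, and your proof would be strengthened by saying explicitly at the outset that the claim is rank-dependent and that you are proving the rank-$5$ case used in Proposition~\ref{5.5}.
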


Let us now concentrate on arithmetic hyperbolic manifolds and recall the process of embedding arithmetic hyperbolic manifolds in Section 3. Given an admissible quadratic form f, we take $q_d=dy^2+$f, where $d$ is square-free in $k$. We assume that f has the diagonal form f=$b_1x_1^2+\cdots+b_4x_4^2$ without loss of generality.

$\Q(\sqrt{5})$ is not a $p$-adic field, so we need to calculate the Hasse invariants at some prime ideal $\fp$ of its ring of integers $\mathcal{O}=\Z\left[\frac{1+\sqrt{5}}{2}\right]$ to find the suitable $d_i\in \Q(\sqrt{5}), i=1,...,6$ such that the corresponding arithmetic hyperbolic 4-manifolds of $\mathrm{SO(}q_{d_i},\Q(\sqrt{5})\mathrm{)}$ are pairwise non-commensurable.

We now recall the definition of the \textit{Frobenius automorphism}.
For a finite field $\mathbb{F}_q$, the Frobenius automorphism is the map $x \mapsto x^q$. It generates the Galois group of $\mathbb{F}_{q^n} / \mathbb{F}_q$, which is cyclic of order $n$.

Now consider a finite Galois extension $L/K$ of number fields.
Let $\mathfrak{p}$ be an unramified prime ideal of $\mathcal{O}_K$ and choose a prime $\mathfrak{P}$ of $\mathcal{O}_L$ lying over $\mathfrak{p}$.
The Galois group $\mathrm{Gal}(L/K)$ acts transitively on such primes.
The decomposition group $D_\mathfrak{P} \subset \mathrm{Gal}(L/K)$ is the subgroup fixing $\mathfrak{P}$.
Within $D_\mathfrak{P}$, the Frobenius automorphism $\mathrm{Frob}_\mathfrak{P}$ is defined by its action on the residue field $\mathcal{O}_L / \mathfrak{P}$:
\[
\mathrm{Frob}_{\mathfrak{P}}(x)\equiv x^{\mathrm{N}_{\mathfrak{p}}}\quad \mathrm{mod}\,\mathfrak{P}\]
where $\mathrm{N}_{\mathfrak{p}}$ is the norm of $\mathfrak{p}$.

The Frobenius conjugacy class $\mathrm{Frob}_\mathfrak{p}$ in $\mathrm{Gal}(L/K)$ is the conjugacy class of this element (independent of the choice of $\mathfrak{P}$ since all such primes are conjugate).

Now suppose $L = K(\sqrt{d})$ is a quadratic Galois extension of fields, with $\mathrm{Gal}(L/K) = \{1, \sigma\}$. Let $\mathfrak{p}$ be a prime of $K$ unramified in $L$. Then the Frobenius element $\mathrm{Frob}_{\mathfrak{p}} \in \mathrm{Gal}(L/K)$ acts either trivially or nontrivially. If it acts trivially, $\mathfrak{p}$ splits in $L$ and if it acts as $\sigma$, then $\mathfrak{p}$ is inert. So the Frobenius automorphism acts trivially if and only if the defining element $d$ of $L = K(\sqrt{d})$ is a square modulo $\mathfrak{p}$.

\bigskip
The Chebotarev density theorem can guarantee the existence of suitable prime ideals. 

\begin{thm}\cite[VII.13, Chebotarev density theorem]{neukirch1999} \label{5.6}

Let $L/k$ be a finite Galois extension of number fields with Galois group $G=\Gal(L/k)$. For any conjugacy class $C\subseteq G$, the set of prime ideals $\fp$ of $k$ whose Frobenius automorphism $\mathrm{Frob_{\fp}}$ lies in $C$ has natural density $\rho$:
\[ \rho=\frac{|C|}{|G|}.
\]
In particular, such primes exist and are infinite in number.
\end{thm}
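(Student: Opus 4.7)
The plan is to follow the classical three-step strategy for Chebotarev: handle the abelian case via Hecke $L$-functions, reduce the general (possibly nonabelian) case to the abelian one by a fixed-field argument, and extract natural density (not merely Dirichlet density) from a Tauberian theorem.

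First I would treat the abelian case. When $G$ is abelian, class field theory identifies $G$ with a quotient of a suitable ray class group of $k$ via the Artin reciprocity map, so that $\mathrm{Frob}_{\fp}$ corresponds to the class of $\fp$. For each character $\chi$ of $G$ one forms the Hecke $L$-function $L(s,\chi)$; the factorization $\zeta_L(s)=\prod_\chi L(s,\chi)$ combined with the simple pole of $\zeta_L$ at $s=1$ forces $L(1,\chi)\neq 0$ for non-trivial $\chi$, and a Hadamard-style positivity trick extends non-vanishing to the whole line $\re(s)=1$. Using orthogonality of characters to express the indicator function of $\{\fp:\mathrm{Frob}_{\fp}=\sigma\}$ as a linear combination of character sums $\sum_\fp \chi(\fp)(\mathrm{N}\fp)^{-s}$, the Wiener--Ikehara Tauberian theorem applied to the resulting Dirichlet series yields natural density $1/|G|$ for each $\sigma\in G$.

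Next I would reduce the general case to the cyclic (hence abelian) case by Deuring's trick. Fix $\sigma\in C$, let $H=\langle\sigma\rangle$, and set $M=L^H$, so that $L/M$ is cyclic with Galois group $H$. A fibre count shows that primes $\fp$ of $k$ with $\mathrm{Frob}_\fp\in C$ correspond, with a multiplicity governed by the centralizer of $\sigma$ in $G$, to primes $\fq$ of $M$ whose Frobenius in $L/M$ equals $\sigma$. Applying the abelian step to $L/M$ gives natural density $1/|H|$ for such $\fq$ in $M$, and combining with the standard density comparison under the finite extension $M/k$ together with the orbit--stabilizer identity $|C|\cdot|Z_G(\sigma)|=|G|$ yields the target value $|C|/|G|$. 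Strict positivity of this density forces the existence of infinitely many such primes.

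The hardest ingredient is the analytic content of the first step, namely the non-vanishing of Hecke $L$-functions on the line $\re(s)=1$ for non-trivial characters --- the number-field analogue of Dirichlet's theorem on primes in arithmetic progressions. Obtaining natural (rather than merely Dirichlet) density is what necessitates the Wiener--Ikehara Tauberian input; the weaker logarithmic-density statement would follow at once from the behaviour of $\log L(s,\chi)$ near $s=1$, but the sharper claim requires the full line $\re(s)=1$ to be free of zeros. By contrast, the reduction via Deuring's trick is essentially combinatorial bookkeeping about decomposition groups and is routine once the analytic machinery is in place.
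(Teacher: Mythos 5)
The paper does not prove this theorem; it is quoted as a cited background result from Neukirch (VII.13), so there is no argument in the paper to compare against. Your sketch is the classical proof and is correct in outline: abelian case via class field theory, non-vanishing of Hecke $L$-functions on $\re(s)=1$ plus a Tauberian theorem to upgrade Dirichlet density to natural density, then Deuring's reduction of the general case to the cyclic extension $L/L^{\langle\sigma\rangle}$, with the orbit--stabilizer identity $|C|\cdot|Z_G(\sigma)|=|G|$ producing the constant $|C|/|G|$. Two small points worth tightening if you wrote this out in full. First, the fibre count is best routed through primes $\fP$ of $L$ with $\mathrm{Frob}_\fP$ \emph{exactly} equal to $\sigma$ (not merely in $C$); each such $\fP$ determines a degree-one prime $\fq=\fP\cap M$ of $M=L^{\langle\sigma\rangle}$, the degree-$\ge 2$ primes of $M$ contribute $O(\sqrt x)$ and are negligible for density, and the fibre of $\fP\mapsto\fp=\fP\cap k$ has size $|Z_G(\sigma)|/|\langle\sigma\rangle|$. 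Second, the Wiener--Ikehara input is applied to $-L'/L(s,\chi)$ (or to $\log L$) rather than directly to the character sum $\sum_\fp\chi(\mathrm{Frob}_\fp)(\mathrm{N}\fp)^{-s}$, since the latter carries no pole; after orthogonality only the trivial character contributes the logarithmic singularity at $s=1$, and that is what the Tauberian theorem converts into the asymptotic count. These are presentational, not gaps.
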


\begin{prop} \label{5.5}
For $q_d=dy^2+$\rm{f} over $\Q(\sqrt{5})$, there exist six different numbers $d_i\in \Q(\sqrt{5}), i=1,...,6$ such that the corresponding closed arithmetic hyperbolic 4-manifolds of $\mathrm{SO(}q_{d_i},\Q(\sqrt{5})\mathrm{)}$ are pairwise non-commensurable.
\end{prop}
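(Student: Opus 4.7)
The plan is to reduce pairwise non-commensurability to pairwise non-similarity of the quadratic forms over $k=\Q(\sqrt 5)$ via Proposition \hyperref[5.1]{5.1}, and then distinguish the similarity classes using Hasse--Witt invariants at a finite collection of primes produced by Chebotarev.

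\textbf{Reduction to Hilbert symbols.} First I would show that in dimension five the Hasse--Witt invariant $\varepsilon_{\mathfrak p}$ at every finite prime $\mathfrak p$ of $\mathcal O=\Z[\tfrac{1+\sqrt 5}{2}]$ is a similarity invariant. For $q=\sum_{k=0}^{4}a_kx_k^{2}$ and $\lambda\in k_{\mathfrak p}^{*}$, expansion of the Hilbert symbols gives
\[
\varepsilon_{\mathfrak p}(\lambda q)=(\lambda,-1)_{\mathfrak p}^{\binom{5}{2}}\cdot\Bigl(\lambda,\prod_{i<j}a_ia_j\Bigr)_{\mathfrak p}\cdot\varepsilon_{\mathfrak p}(q)=\varepsilon_{\mathfrak p}(q),
\]
since $\binom{5}{2}=10$ is even and each $a_k$ appears in exactly $4$ unordered pairs, so $\prod_{i<j}a_ia_j=(\prod_k a_k)^{4}$ is a square. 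Hence if $\varepsilon_{\mathfrak p}(q_{d_i})\neq\varepsilon_{\mathfrak p}(q_{d_j})$ at some prime, then $q_{d_i}$ and $q_{d_j}$ are not similar over $k_{\mathfrak p}$, hence not similar over $k$. Setting $B=b_1b_2b_3b_4\in k^{*}$, expansion further yields $\varepsilon_{\mathfrak p}(q_d)=(d,B)_{\mathfrak p}\cdot\varepsilon_{\mathfrak p}(\mathrm f)$, so the task reduces to producing six admissible $d_i\in k$ whose vectors $((d_i,B)_{\mathfrak p})_{\mathfrak p}$ of Hilbert symbols are pairwise distinct.

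\textbf{Construction of the six $d_i$.} Assuming $B$ is not already a square in $k$ (which can be arranged by a harmless scaling of $\mathrm f$ if necessary), Chebotarev (Theorem \hyperref[5.6]{5.6}) applied to the quadratic extension $k(\sqrt B)/k$ produces infinitely many primes of $k$ inert in $k(\sqrt B)$, i.e.\ at which $B$ is a nonsquare unit in the completion. I pick three such primes $\mathfrak p_1,\mathfrak p_2,\mathfrak p_3$ of odd residue characteristic lying over distinct rational primes. At each such $\mathfrak p_k$, a unit $d$ satisfies $(d,B)_{\mathfrak p_k}=-1$ precisely when $d$ is a nonsquare modulo $\mathfrak p_k$, so I can freely prescribe any of the $2^{3}=8$ sign patterns in $\{\pm 1\}^{3}$ by choosing residues in the three residue fields via the Chinese remainder theorem. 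Picking six distinct patterns and realizing each one by a totally positive square-free representative gives $d_1,\ldots,d_6$ with pairwise distinct Hasse invariant profiles, hence pairwise nonsimilar quadratic forms $q_{d_i}$, and by Proposition \hyperref[5.1]{5.1} the associated hyperbolic $4$-manifolds are pairwise non-commensurable.

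\textbf{Main obstacle.} The delicate point is simultaneously arranging (i) the three prescribed local Hilbert symbols at $\mathfrak p_1,\mathfrak p_2,\mathfrak p_3$, (ii) total positivity at both real embeddings of $k$ (required for admissibility of $q_{d_i}$), and (iii) square-freeness in $\mathcal O$, while also respecting the global product formula $\prod_{\mathfrak p}(d,B)_{\mathfrak p}=1$. The plan is to absorb the product-formula constraint at one further auxiliary prime $\mathfrak q\notin\{\mathfrak p_1,\mathfrak p_2,\mathfrak p_3\}$ (again produced by Chebotarev) and to adjust real signs by multiplying in totally positive units or fully positive rational primes, which leaves the local symbols at the $\mathfrak p_k$ untouched. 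Square-freeness is generic and can be ensured by a final replacement within the given square class. Once these bookkeeping issues are settled, the Hilbert-symbol calculation above yields the six required values.
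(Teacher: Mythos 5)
Your overall route---distinguish similarity classes via the Hasse--Witt invariant at Chebotarev-produced primes and then invoke Proposition~\hyperref[5.1]{5.1}---is the same as the paper's, and your opening computation that $\varepsilon_\fp$ is actually a similarity invariant in rank five (because $\binom{5}{2}$ is even and $\prod_{i<j}a_ia_j=(\prod_k a_k)^4$ is a square) is a genuine gap-fill that the paper leaves unaddressed. However, the key construction step fails. At an odd prime $\fp$ of $\mathcal{O}$ where $d$ and $B$ are \emph{both} units, the Hilbert symbol $(d,B)_\fp$ equals $+1$ regardless of whether $d$ is a residue or a nonresidue modulo $\fp$: the norm group of the unramified quadratic extension $k_\fp(\sqrt{B})/k_\fp$ consists exactly of the elements of even valuation, hence contains every unit (equivalently, any ternary form with unit coefficients over a $p$-adic field, $p$ odd, represents zero by Chevalley--Warning and Hensel). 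Thus your claim that ``a unit $d$ satisfies $(d,B)_{\fp_k}=-1$ precisely when $d$ is a nonsquare modulo $\fp_k$'' is false --- for unit $d$ the symbol is identically $+1$ --- and the CRT prescription of residue classes cannot separate any of the Hasse invariants, so the six $d_i$ you produce are not shown to give non-similar forms.

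You should also know that the paper's own proof has the same defect: its conditions (1), (2), (3) are mutually inconsistent, since the quadratic characters cut out by $F(\sqrt{d_i^{-1}d_j})$, $F(\sqrt{b_1})$, and $F(\sqrt{b_1d_id_j^{-1}})$ multiply to the trivial character, so no Frobenius element can be nontrivial on all three; and even granting (1) and (2) with $\fp\nmid b_1d_id_j$, one gets $(b_1,d_i^{-1}d_j)_\fp=+1$, not $-1$. The repair, for both arguments, is to make the $d_i$ differ in \emph{valuation}, not merely in residue class, at the distinguishing primes. Concretely: pick distinct odd primes $\fp_1,\dots,\fp_6$ of $\mathcal{O}$, each inert in $k(\sqrt{B})/k$ (so $B$ is a unit nonsquare there) and none dividing any $b_l$, and choose $d_i$ exactly divisible by $\fp_i$ and a unit at $\fp_j$ for $j\neq i$; total positivity and square-freeness can then be arranged as in your ``main obstacle'' paragraph. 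Then $(d_i,B)_{\fp_i}=-1$ while $(d_j,B)_{\fp_i}=+1$ for $j\neq i$, so $\varepsilon_{\fp_i}(q_{d_i})\neq\varepsilon_{\fp_i}(q_{d_j})$ and the six forms are pairwise non-similar over $k$.
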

\begin{proof}
We denote $\Q(\sqrt{5})$ by $F$ for simplicity. Square classes of $F$ are elements of $F^{\times}/(F^{\times})^2$, and there are infinitely many square classes for $F=\Q(\sqrt{5})$. So we can choose $d_i, i=1,...,6$ so that $d_i$ and $d_j$ are not in the same square class of $F^{\times}$ for $i\neq j$ and that $b_1$ and $d_i^{-1}d_j$ are not in the same square class of $F$ for $i\neq j$.

Fix $i\neq j$. Let $L_1=F(\sqrt{d_i^{-1}d_j})$, $L_2=F(\sqrt{b_1},...,\sqrt{b_4})$ and $L=L_1\cdot L_2$ be the compositum of the two finite extensions. $L/F$ is a Galois extension, and Theorem \hyperref[5.6]{5.6} ensures the existence of some prime ideal $\fp$ where:
\begin{enumerate}[1.]
\item $\mathrm{Frob}_{\fp}$ acts as nontrivial on $F(\sqrt{d_i^{-1}d_j})$ (so $d_i^{-1}d_j$ is a non-square in $F_{\fp}$).
\item $\mathrm{Frob}_{\fp}$ acts as nontrivial on $F(\sqrt{b_1})$ (so $b_1$ is a non-square in $F_{\fp}$).
\item $\mathrm{Frob}_{\fp}$ acts as nontrivial on $F(\sqrt{b_1d_id_j^{-1}})$ (so $b_1$ is a non-square in $F_{\fp}$ and $b_1$ is not a norm from $F(\sqrt{d_i^{-1}d_j})/F$).
\item $\mathrm{Frob}_{\fp}$ acts as trivial on $F(\sqrt{b_l})$ for $l\neq 1$ (so $b_l$ are squares in $F_{\fp}$).
\end{enumerate}

By Proposition \hyperref[5.4]{5.4}, we obtain $(b_1, d_i^{-1}d_j)_{F_{\fp}}=-1$ and $(b_l, d_i^{-1}d_j)_{F_{\fp}}=1$ for $l\neq 1$, so we know that $\varepsilon_{F_{\fp}}(q_{d_i})\neq \varepsilon_{F_{\fp}}(q_{d_j})$. By Proposition \hyperref[5.5]{5.5}, the two quadratic forms $q_{d_i}$ and $q_{d_j}$ are not similar over $F_{\fp}$, and hence a fortiori $q_{d_i}$ and $q_{d_j}$ are not similar over $F$. Since $i,j$ are arbitrary, the quadratic forms $q_{d_i}$ are pairwise not similar over $F$, and by Proposition \hyperref[5.1]{5.1}, we prove the proposition.
\end{proof}

By taking the double covers of the first four closed arithmetic hyperbolic 4-manifolds as in Corollary \hyperref[2.4]{2.4} if necessary, we obtain four non-commensurable closed arithmetic hyperbolic 4-manifolds $W_{d_i}, i=1,...,4$ containing the $L$-space $M_5$ as a non-separating hypersurface. Cut $W_{d_i},i=1,...,4$ along one copy of $M_5$, and then we obtain the building blocks $A^{\pm}$ and $B^{\pm}$.
They are the ones placed on the edges of the finite decorated graph.

Recall the process of embedding arithmetic hyperbolic manifolds in Section 3. Given an arithmetic hyperbolic $(n-1)$-manifold $N$, we embed $N$ into an arithmetic hyperbolic manifold $M=\H^n/\Gamma$ by considering the $q$-hyperboloid model for $\H^n$. Let $R=\H^n\cap\{x\in\R^{n+1}:x_0=0\}$ and $N=R/\Gamma_0$, we find $\Gamma$ such that $\Gamma_0\subseteq\Gamma$ is the subgroup of $\Gamma$ consisting of elements that preserve $R$ and embed $N$ into $M=\H^n/\Gamma$.

To obtain building blocks with four totally geodesic boundary components, we need the following proposition:

\begin{prop}\cite[Proposition 4.3]{gelander2014countingcommensurabilityclasseshyperbolic}\label{5.8}
 For every $m\in\Z$ there exists a finite normal cover $M'$ of $M$ that contains \rm{(}at least\rm{)} $m$ disjoint copies $N_1,...,N_m$ of $N$ such that $M'\setminus \mathop{\cup}\limits_{i=1}^m N_i$ is connected.
\end{prop}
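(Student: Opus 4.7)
The plan is to combine two ingredients: separability of geometrically finite subgroups of arithmetic lattices, which yields arbitrarily many disjoint embedded lifts of $N$ in a finite normal cover, and a combinatorial argument on a dual graph to select $m$ of those lifts whose removal leaves the cover connected.

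First, the stabilizer $\Gamma_0 = \mathrm{Stab}_\Gamma(R) \leq \Gamma$ is geometrically finite, since $R$ is a totally geodesic codimension-one hyperplane. Arithmetic hyperbolic lattices of simplest type are GFERF (by Bergeron--Haglund--Wise), so $\Gamma_0$ is separable in $\Gamma$. It follows that there is a descending chain of finite-index normal subgroups $\Gamma_j \triangleleft \Gamma$, for instance principal congruence subgroups of increasing level, such that the preimage of $N$ in $M_j = \mathbb{H}^n/\Gamma_j$ is a disjoint union $N_1 \sqcup \cdots \sqcup N_{s_j}$ of embedded totally geodesic copies of $N$, with $s_j \to \infty$.

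Next, I would fix $j$ large and form the dual graph $\mathcal{G}_j$ whose vertices are the connected components of $M_j \setminus (N_1 \cup \cdots \cup N_{s_j})$ and whose edges are the $N_i$, each joining the at most two components on its sides (a non-separating $N_i$ contributes a loop edge). Since $M_j$ is connected so is $\mathcal{G}_j$, and $M_j \setminus \bigcup_{i \in S} N_i$ is connected exactly when the subgraph of $\mathcal{G}_j$ obtained by deleting the edges in $S$ is still connected. Removing $m$ edges from a connected graph preserves connectedness precisely when those edges lie in the complement of some spanning tree, which is possible as soon as the cycle rank $b_1(\mathcal{G}_j) = s_j - |V(\mathcal{G}_j)| + 1$ is at least $m$.

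The main obstacle is to show that $b_1(\mathcal{G}_j) \to \infty$ as $j \to \infty$. The deck group $Q_j = \Gamma/\Gamma_j$ acts transitively on the $N_i$ with stabilizer the image $\bar\Gamma_0$ of $\Gamma_0$, giving $s_j = [Q_j : \bar\Gamma_0]$, while $|V(\mathcal{G}_j)|$ divides $|Q_j|$ via its action on complementary components. To produce cycles in $\mathcal{G}_j$, I would use elements $g \in \Gamma \setminus \Gamma_0$ whose axes meet $R$ transversely; after descending to $\Gamma_j$ deep enough, such an element projects to a closed loop in $M_j$ crossing several $N_i$ transversely, tracing out a nontrivial cycle in $\mathcal{G}_j$. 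Quantifying this via an orbit-counting argument yields the required growth of $b_1(\mathcal{G}_j)$; selecting $m$ edges off a spanning tree and taking the corresponding hypersurfaces as $N_1, \ldots, N_m$ then gives the desired normal cover $M' = M_j$.
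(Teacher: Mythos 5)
Your dual-graph framework is mathematically the same as the paper's, which encodes the decomposition of a cover $M'$ along the preimages of $N$ as the Bass--Serre graph of the induced splitting of $\pi_1(M')$. The criterion you state — that $m$ hypersurfaces are jointly non-separating exactly when the corresponding $m$ edges lie off a spanning tree, hence exist iff the cycle rank is at least $m$ — is also the criterion the paper uses implicitly. The real content of the proposition is showing that for a suitable normal cover the dual graph has cycle rank at least $m$, and that is where your argument has a genuine gap.

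Concretely: if $N$ separates $M$, write $\Gamma=\Gamma_1*_{\Gamma_0}\Gamma_2$, let $Q_j=\Gamma/\Gamma_j$, and let $\bar\Gamma_0,\bar\Gamma_1,\bar\Gamma_2$ denote the images. Then your graph $\mathcal G_j$ has $[Q_j:\bar\Gamma_0]$ edges and $[Q_j:\bar\Gamma_1]+[Q_j:\bar\Gamma_2]$ vertices, so
\[
b_1(\mathcal G_j)=|Q_j|\left(\tfrac{1}{|\bar\Gamma_0|}-\tfrac{1}{|\bar\Gamma_1|}-\tfrac{1}{|\bar\Gamma_2|}\right)+1,
\]
which grows only if $\tfrac1{k_1}+\tfrac1{k_2}<1$ where $k_i=[\bar\Gamma_i:\bar\Gamma_0]$. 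For a random normal cover (or even a typical congruence cover) there is no a priori reason the images $\bar\Gamma_i$ of the factor groups should be large relative to $\bar\Gamma_0$; your appeal to "elements $g\in\Gamma\setminus\Gamma_0$ crossing $R$ transversely" and an unspecified "orbit-counting argument" does not control this ratio and can easily produce only backtracking walks rather than independent cycles. The paper's proof supplies precisely the missing input: by the Weisfeiler--Nori strong approximation theorem each $\Gamma_i$ surjects (up to index $2$) onto almost every congruence quotient, while $\Gamma_0$ lies in a parabolic so its image can be made of arbitrarily small relative size; hence one can arrange $k_1,k_2\ge 3$. It then passes to $\Lambda=\bar\Gamma_1*_{\bar\Gamma_0}\bar\Gamma_2$, uses Bass--Serre theory to find a normal finite-index free subgroup $\Lambda''$ of rank $\ge m$ acting freely on the biregular tree, and pulls $\Lambda''$ back to $\Gamma$ to get the desired normal cover, with the $m$ non-separating hypersurfaces read off a spanning tree of $\Lambda''\backslash T$. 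Without strong approximation (or an equivalent device for bounding $k_1,k_2$ from below), your argument does not close.

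Two smaller points. First, the invocation of GFERF and separability of $\Gamma_0$ is superfluous at this stage: $N$ is already embedded in $M$ by the results of Section~3, so its preimage in any finite cover is automatically a disjoint union of embedded totally geodesic hypersurfaces; you do not need a fresh appeal to Bergeron--Haglund--Wise to achieve disjointness. Second, if you want to run an orbit-counting argument along the lines you sketch, you must be careful that a closed geodesic crossing several lifts of $N$ merely gives a closed walk in $\mathcal G_j$, which need not be a reduced cycle and contributes at most one unit to $b_1$ (via algebraic intersection mod~$2$); producing $m$ independent cycles requires a genuinely new argument, which is exactly what the Euler-characteristic/strong-approximation route supplies.
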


\begin{proof}
Suppose first that $N$ is separating in $M$. It follows that $\Gamma$ is isomorphic to the amalgamated product $\Gamma_1*_{\Gamma_0}\Gamma_2$, where $\Gamma_1$ and $\Gamma_2$ are the fundamental groups of the two connected components of $M\setminus N$. By \cite[Section 0.1]{PMIHES_1987__66__93_0} the subgroups $\Gamma_i, i=1,2$ are Zariski dense in $\mathrm{SO}(n,1)$ and their Zariski closures in $\mathrm{SO}(n+1,\C)$ are also semisimple algebraic groups. Since $\Gamma_i\subset\mathrm{SO}(n,1)\subset\mathrm{SO}(n+1,\C)$ and that $\mathrm{SO(}n+1,\C\mathrm{)}$ is an order 2 quotient of its universal covering, it follows from the Weisfeiler–Nori strong approximation theorem (see \cite{Rapinchuk2013}) that each $\Gamma_i, i=1,2$ is mapped to a subgroup of index at most 2 in almost every congruence quotient of $\Gamma$ by considering the lift of $\Gamma_i$ to Spin($n+1,\C$).

Since $\Gamma_0$ is the intersection of $\Gamma$ with a parabolic subgroup\[
\Gamma_0 =\{\gamma\in\Gamma:\gamma=
\begin{pmatrix}
1 & 0 & \cdots &  0\\
* & * &  *   & * \\
\vdots & * & \ddots & *\\
* & * & * & *\\
\end{pmatrix}\},
\]
it is clear that we may find congruence quotients of $\Gamma$ in which the image of $\Gamma_0$ is of arbitrarily large index. Let $\Gamma(p)$ be principal congruence subgroup in $\Gamma$ such that\[
[\Gamma_i\cap\Gamma(p):\Gamma_0\cap\Gamma(p)]=k_i\geq 3, \,\mathrm{for}\,i=1,2
\]
and denote by $\bar{\Gamma}_i, i=0,1,2$ the image of $\Gamma_i$ in the finite group $\Gamma/\Gamma(p)$, respectively. Set $\Lambda=\bar{\Gamma}_1*_{\bar{\Gamma}_0}\bar{\Gamma}_2$ and consider the map\[
\pi :\Gamma=\Gamma_1*_{\Gamma_0}\Gamma_2\rightarrow \bar{\Gamma}_1*_{\bar{\Gamma}_0}\bar{\Gamma}_2=\Lambda.\]

According to the Bass-Serre theory (see \cite[Chapter I, Theorem 5.1]{Bass1993Covering}), the group $\Lambda$ acts on the $(k_1,k_2)$-bi-regular tree $T$. It is well known (see \cite[p. 120]{Serre1980}) that $\Lambda$ has a finite index free subgroup $\Lambda'$ acting freely on $T$ with $\Lambda'\setminus T$ being a $(k_1,k_2)$-bi-regular finite graph. By taking a further finite index subgroup $\Lambda''$ we can assume that $\Lambda''$ is normal in $\Lambda$ and of rank at least $m$. It follows that the graph $\Lambda''\setminus T$ has at least $m$ simultaneously non-separating edges.

The group $\Gamma$ acts on $T$ as well via the map $\pi$. Let $\Gamma''=\pi^{-1}(\Lambda '')\unlhd\Gamma$. As $\Gamma''$ acts on $T$ with the same fundamental domain as $\Lambda''$, it splits as a graph of groups over the graph $\Lambda''\setminus T$. Moreover, this graph of groups covers the graph of groups of $\Lambda=\bar{\Gamma}_1*_{\bar{\Gamma}_0}\bar{\Gamma}_2$
(see \cite[Section 4]{Bass1993Covering}).

To complete the proof in this case, let $M'$ be the normal cover of $M$ corresponding to $\Gamma''$. The connected components of the preimage of $N$ inside $M'$ serve as edges in a decomposition of $M'$ according to the graph structure of $\Gamma''\setminus T$. Since $M'$ is normal, it is clear from the construction that all these connected components are isometric to $N$. Moreover as $N$ embeds in $M$, every two of them are disjoint. The result follows by taking copies of $N$ which correspond to a jointly non-separating set of $m$ edges of $\Gamma''\setminus T$.

The remaining case where $M$ is non-separating is dealt with by a similar argument. In that case $\Gamma$ is isomorphic to the HNN extension $\Gamma_1*_{\Gamma_0}$, where $\Gamma_1$ is the fundamental group of $M\setminus N$. The map $\pi$ is defined in an analogous fashion, and essentially the same proof goes through. 
\end{proof}

By the above proposition, we can obtain the building blocks $V_0$ and $V_1$ by taking the case $2m=4$ for $N=M_5$ and $M=W_{d_i}, i=5,6$. So we have constructed the six building blocks required in Section 4.

The following proposition tells the number of decorated graphs up to isomorphism:

\begin{prop}\cite[Ch. 2]{lubotzky_segal2003}\label{5.9}
Let $a_n$ denote the number of subgroups of index $n$ in the free group $F$ on two generators. Then $a_n \geq n^{\frac{n}{2}}$ for every $n$.
\end{prop}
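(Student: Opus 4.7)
The plan is to use the classical correspondence between finite-index subgroups and transitive permutation representations of $F$. To each subgroup $H \leq F$ of index $n$, the coset action of $F$ on $F/H$, together with a bijection $F/H \to \{1,\ldots,n\}$ sending $H \mapsto 1$, determines a transitive homomorphism $F \to S_n$; conversely, such a pointed transitive representation recovers $H$ as the stabilizer of $1$. Different labellings produce different homomorphisms because the only $F$-equivariant self-bijection of $F/H$ fixing $H$ is the identity. Since $F$ is free of rank two, a homomorphism $F \to S_n$ is just a pair $(\sigma, \tau) \in S_n \times S_n$, so it suffices to exhibit many pairs whose image acts transitively and whose stabilizers at $1$ are pairwise distinct subgroups of index $n$.

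To do so, I would fix $\sigma = (1\,2\,\cdots\,n)$ to be the standard $n$-cycle and let $\tau$ range freely over $S_n$. Since $\sigma$ already acts transitively, every such pair gives a transitive representation, and the stabilizer of $1$ is a subgroup of index exactly $n$. If two pairs $(\sigma, \tau_1)$ and $(\sigma, \tau_2)$ produced the same stabilizer, there would exist $\phi \in S_n$ with $\phi(1) = 1$, $\phi \sigma \phi^{-1} = \sigma$, and $\phi \tau_1 \phi^{-1} = \tau_2$. But the centralizer of an $n$-cycle in $S_n$ is $\langle \sigma \rangle$, and this cyclic group acts freely on $\{1,\ldots,n\}$, so $\phi(1) = 1$ forces $\phi = e$ and therefore $\tau_1 = \tau_2$. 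This yields $n!$ distinct subgroups of index $n$, so $a_n \geq n!$.

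To conclude, I would deduce the claimed inequality from the elementary identity
\[
(n!)^2 \;=\; \prod_{k=1}^n k(n+1-k) \;\geq\; \prod_{k=1}^n n \;=\; n^n,
\]
where the inequality uses $k(n+1-k) \geq n$ for $1 \leq k \leq n$, a one-line quadratic estimate. Taking square roots gives $n! \geq n^{n/2}$, hence $a_n \geq n^{n/2}$. The only non-routine step is the uniqueness argument in the second paragraph, which rests on the structure of the centralizer of an $n$-cycle; the remaining pieces are standard bookkeeping and an elementary inequality, so I do not expect any serious obstacle.
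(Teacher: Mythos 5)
Your proof is correct, and the argument is the standard one underlying the bound cited from Lubotzky--Segal: count index-$n$ subgroups via pointed transitive actions of the free group on $\{1,\dots,n\}$, i.e.\ via pairs $(\sigma,\tau)\in S_n\times S_n$ generating a transitive subgroup. The key step---fixing $\sigma$ to be an $n$-cycle so that transitivity is automatic and then showing the $n!$ choices of $\tau$ yield pairwise distinct stabilizers because the centralizer of an $n$-cycle is $\langle\sigma\rangle$, which acts semiregularly---is clean and rigorous, and the elementary estimate $(n!)^2=\prod_{k=1}^n k(n+1-k)\ge n^n$ finishes the job. Since the paper itself offers only a citation and no proof, there is nothing to compare beyond noting that this is the natural self-contained argument one would extract from the cited source.
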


Let $V_0$ denote the maximal volume of the six building blocks. For each representative $F'$ of the isomorphism classes of subgroups of $F$ of the index $n$, the corresponding $M_\Delta$ has volume $V\leq 5nV_0$. 

So for given volume $\frac{V}{2}$, we can construct at least $[V/10V_0]^{\frac{[V/10V_0]}{2}}$ closed hyperbolic 4-mainfolds with volume below $\frac{V}{2}$ containing the non-separating $L$-space $M_5$ by Proposition \hyperref[5.9]{5.9}. They are pairwise non-commensurable by Proposition \hyperref[4.4]{4.4}, and by Corollary \hyperref[2.4]{2.4} we can find $[V/10V_0]^{\frac{[V/10V_0]}{2}}$ pairwise non-commensurable closed hyperbolic 4-mainfolds at volume most $V$ with vanishing Seiberg-Witten invariants. By substituting \(n = \lfloor V/10V_0 \rfloor\) into Proposition 5.9, we derive the lower bound \(VC(v) \geq v^{cv}\) for \(v\) sufficiently large.

The upper bound $v^{av}$ is exactly from \cite[Theorem 1.1]{gelander2014countingcommensurabilityclasseshyperbolic}, so we prove Theorem \hyperref[1.1]{1.2}.

\section*{Acknowledgments} 
 I am extremely grateful to my mentor, Ben Lowe, who helped me discover this fascinating topic and provided feedback on the paper. I extend my deepest appreciation to Peter May for organizing this exceptional REU program and giving me valuable advice on the paper.

\newpage

\bibliographystyle{alpha}
\bibliography{reference}

\begin{thebibliography}{KMOS04}

\bibitem[AL18]{agol2018hyperbolicfourmanifoldsvanishingseibergwitten}
Ian Agol and Francesco Lin.
\newblock Hyperbolic four-manifolds with vanishing seiberg-witten invariants, 2018.

\bibitem[Bas93]{Bass1993Covering}
Hyman Bass.
\newblock Covering theory for graphs of groups.
\newblock In {\em Journal of Pure and Applied Algebra}, volume~89, pages 3--47. Elsevier, 1993.
\newblock Special volume in honor of Armand Borel.

\bibitem[GL14]{gelander2014countingcommensurabilityclasseshyperbolic}
Tsachik Gelander and Arie Levit.
\newblock Counting commensurability classes of hyperbolic manifolds, 2014.

\bibitem[GPS87]{PMIHES_1987__66__93_0}
Michael Gromov and I.~Piatetski-Shapiro.
\newblock Non-arithmetic groups in {Lobachevsky} spaces.
\newblock {\em Publications Math\'ematiques de l'IH\'ES}, 66:93--103, 1987.

\bibitem[KM07]{Kronheimer_Mrowka_2007}
Peter Kronheimer and Tomasz Mrowka.
\newblock {\em Monopoles and Three-Manifolds}.
\newblock New Mathematical Monographs. Cambridge University Press, 2007.

\bibitem[KMOS04]{kronheimer2004monopoleslensspacesurgeries}
Peter Kronheimer, Tomasz Mrowka, Peter Ozsvath, and Zoltan Szabo.
\newblock Monopoles and lens space surgeries, 2004.

\bibitem[KRS18]{Kolpakov_2018}
Alexander Kolpakov, Alan~W. Reid, and Leone Slavich.
\newblock Embedding arithmetic hyperbolic manifolds.
\newblock {\em Mathematical Research Letters}, 25(4):1305–1328, 2018.

\bibitem[Lam05]{lam2005algebraic}
T.Y. Lam.
\newblock {\em The Algebraic Theory of Quadratic Forms}.
\newblock Mathematics Lecture Note Series. Springer, 2005.

\bibitem[LeB01]{lebrun2001hyperbolicmanifoldsharmonicforms}
Claude LeBrun.
\newblock Hyperbolic manifolds, harmonic forms, and seiberg-witten invariants, 2001.

\bibitem[Lin17]{Lin_2017}
Francesco Lin.
\newblock Pin(2)‐monopole floer homology, higher compositions and connected sums.
\newblock {\em Journal of Topology}, 10(4):921–969, September 2017.

\bibitem[LS03]{lubotzky_segal2003}
Alexander Lubotzky and Dan Segal.
\newblock {\em Subgroup Growth}, volume 212 of {\em Progress in Mathematics}.
\newblock Birkhäuser, 2003.

\bibitem[Mor96]{cbaf2759-40f3-37dd-8d44-27af0e7bad3a}
John~W. Morgan.
\newblock {\em The Seiberg-Witten Equations and Applications to the Topology of Smooth Four-Manifolds. (MN-44)}.
\newblock Princeton University Press, 1996.

\bibitem[Mor15]{morris2015introductionarithmeticgroups}
Dave~Witte Morris.
\newblock Introduction to arithmetic groups, 2015.

\bibitem[Neu99]{neukirch1999}
J{\"u}rgen Neukirch.
\newblock {\em Algebraic Number Theory}, volume 322 of {\em Grundlehren der mathematischen Wissenschaften}.
\newblock Springer-Verlag, Berlin, 1999.
\newblock Translated from the 1992 German original by Norbert Schappacher.

\bibitem[Rap13]{Rapinchuk2013}
Andrei~S. Rapinchuk.
\newblock Strong approximation for algebraic groups.
\newblock In E.~Breuillard, A.~Gamburd, and P.~Sarnak, editors, {\em Thin Groups and Superstrong Approximation}, volume~61 of {\em MSRI Publications}, pages 371--405. Cambridge University Press, 2013.

\bibitem[Sco78]{Scott1978}
G.~P. Scott.
\newblock Subgroups of surface groups are almost geometric.
\newblock {\em Journal of the London Mathematical Society. Second Series}, 17(3):555--565, 1978.

\bibitem[Ser73]{serre1973course}
Jean-Pierre Serre.
\newblock {\em A Course in Arithmetic}, volume~7 of {\em Graduate Texts in Mathematics}.
\newblock Springer-Verlag, New York, 1973.

\bibitem[Ser80]{Serre1980}
Jean-Pierre Serre.
\newblock {\em Trees}.
\newblock Springer Monographs in Mathematics. Springer-Verlag, 1980.
\newblock Translated from the French original by John Stillwell.

\bibitem[Wit94]{witten1994monopolesfourmanifolds}
Edward Witten.
\newblock Monopoles and four-manifolds, 1994.

\end{thebibliography}

\end{document}